\newcommand{\HF}{\mathrm{HF}}
\title[Uniformity for bounded degree polynomials]{Cubics in 10 variables vs. cubics in 1000 variables:\\
 Uniformity phenomena for bounded degree polynomials}
\author{Daniel Erman}
\address{Department of Mathematics, University of Wisconsin, Madison, WI}
\email{\href{mailto:derman@math.wisc.edu}{derman@math.wisc.edu}}
\urladdr{\url{http://math.wisc.edu/~derman/}}
\author{Steven V Sam}
\address{Department of Mathematics, University of California, San Diego, CA}
\email{\href{mailto:ssam@ucsd.edu}{ssam@ucsd.edu}}
\urladdr{\url{http://math.ucsd.edu/~ssam/}}
\author{Andrew Snowden}
\address{Department of Mathematics, University of Michigan, Ann Arbor, MI}
\email{\href{mailto:asnowden@umich.edu}{asnowden@umich.edu}}
\urladdr{\url{http://www-personal.umich.edu/~asnowden/}}
\thanks{DE was partially supported by NSF DMS-1601619.\\
  \indent SS was partially supported by NSF DMS-1500069, NSF DMS-1812462, and a Sloan Fellowship.\\
\indent  AS was partially supported by NSF DMS-1453893.}
\date{\today}
\begin{document}

\maketitle

\section{Introduction}
In two landmark papers~\cite{hilbert1890,hilbert1893}, Hilbert laid the foundations for the modern algebraic study of polynomials. The theorems at the heart of these papers show that polynomials in $n$ variables are not too complicated, in various senses. For example, the Hilbert Syzygy Theorem shows that the process of resolving a module by free modules terminates in finitely many (in fact, at most $n$) steps, while the Hilbert Basis Theorem shows that the process of finding generators for an ideal also terminates in finitely many steps. Hilbert used his theorems to show that invariant rings are finitely generated, resolving one of the central problems of his day; in the years since, entire fields of mathematics have been built around Hilbert's results.

Obviously, polynomials in $n$ variables will typically exhibit greater complexity as $n$ increases. In other words, Hilbert's theorems are not uniform in $n$. However, an array of recent work has shown that in a certain regime---namely, that where the number of polynomials and their degrees are fixed---the complexity of polynomials remains bounded, at least according to a wide variety of measures. We refer to this phenomenon as {\bf Stillman uniformity}, as Stillman's Conjecture is the model case. The purpose of this paper is to give an exposition of Stillman uniformity and some of the work around it.

Our account is focused on four closely related threads of work, which we now introduce and briefly summarize.

\medskip

\noindent {\bf I. Stillman's Conjecture.} The first indication, as far as we are aware, of the general phenomenon of Stillman uniformity can be found in a conjecture posed by Michael Stillman around the year 2000\footnote{It first appeared in print in \cite[\S1]{engheta2}.  See also~\cite[Problem 3.14]{PS}.}. Recall that the {\bf projective dimension} of a module is the minimal length of a projective resolution (see \S \ref{s:stillman} for a review). This is a fundamental, albeit rather technical, invariant. Hilbert's Syzygy Theorem is exactly the statement that every module over an $n$-variable polynomial ring has projective dimension at most $n$. Stillman's Conjecture refines this theorem: it asserts that the projective dimension of an ideal in an $n$-variable polynomial ring generated by $r$ homogeneous polynomials\footnote{A polynomial is {\bf homogeneous} if all terms have the same degree.  For instance $x_1^3+x_1x_2x_3$ is homogeneous of degree $3$, but $x_1^3+x_1^2$ is not homogeneous.} of degrees $\le d$ can be bounded in terms of $r$ and $d$, but independent of the number $n$ of variables. In other words, in this particular regime, Hilbert's Syzygy Theorem holds uniformly in $n$. Stillman's Conjecture was proved by Ananyan and Hochster \cite{ananyan-hochster} in 2016, and has subsequently been reproven by us \cite{stillman} and Draisma, Laso\'n, and Leykin \cite{draisma-lason-leykin}.

\medskip

\noindent {\bf II. The Ananyan--Hochster principle.} In the course of proving Stillman's Conjecture, Ananyan and Hochster prove a number of fundamental results on the structure of polynomials. These results can be seen as special cases of the following general principle, which we call the Ananyan--Hochster principle: given homogeneous polynomials $f_1, \ldots, f_r$ of degrees $\le d$ in any number $n$ of variables, one can write $f_i=F_i(g_1, \ldots, g_s)$ where $F_i$ is a polynomial and $g_1, \ldots, g_s$ are homogeneous polynomials of degrees $\le d$ such that (a) $s$ depends only on $d$ and $r$ (and, crucially, not on $n$); and (b) $g_1, \ldots, g_s$ behave approximately like $s$ independent variables. In other words, the $f_i$'s look like polynomials in $s$ variables. For instance, four cubic polynomials in $1000$ variables (or in $10^{10}$ or $10^{100}$ variables) will behave like polynomials in $s$ variables for some fixed $s$.  One can therefore expect the $f_i$'s to satisfy the same sort of finiteness properties that Hilbert established, and thereby obtain Stillman uniformity.

Of course, it is crucial here to understand the exact meaning of condition (b). In fact, there are many possible precise meanings, and each yields a definite statement that may or may not be true. 
Ananyan and Hochster proved a number of incarnations of the principle, and subsequently other incarnations have been proved as well.  This is discussed in much more detail in \S\ref{s:principle}--\S\ref{s:reggerm}.

\medskip

\noindent {\bf III. Big polynomial rings.} 
We say that a homogeneous polynomial $f$ is {\bf $n$-decomposable} if it can be written in the form $f=F(g_1, \ldots, g_n)$, where $F$ is a polynomial and the $g_i$ are homogeneous polynomials of smaller degree than $f$. This is one way measure of the complexity of $f$. For homogeneous polynomials $f_1, \ldots, f_r$, we let $\nu(f_1, \ldots, f_r)$ be the minimum value of $n$ such that some non-trivial homogeneous linear combination of the $f_i$'s is $n$-decomposable. This is a kind of measure of the joint complexity of the $f_i$'s. The Ananyan--Hochster principle easily reduces to the claim: if $\nu(g_1, \ldots, g_s)$ is large then $g_1, \ldots, g_s$ behave approximately like independent variables. This is an asymptotic statement: as $\nu$-complexity increases, so does the approximation to independent variables.

A general principle of mathematics is that it is often useful to take the limit of an asymptotic statement to obtain an exact statement. Indeed, the limiting statement is often cleaner and can reveal deeper truths about the asymptotic situation. In \cite{stillman}, we applied this philosophy to the Ananyan--Hochster principle. We defined two rings $\bR$ and $\bS$, which, in this paper, we refer to as the ring of bounded-degree series and the ring of bounded-degree germs. These rings can be viewed as two different limits\footnote{As we will see, the ring $\bR$ arises as an inverse limit, a common algebraic construction.  The ring $\bS$ is a bit more exotic, and involves the model-theoretic notion of an ultraproduct.} of the $n$-variable polynomial rings as $n$ tends to infinity. One can define the quantity $\nu(g_1, \ldots, g_s)$ for $g_1, \ldots, g_s$ in either of these rings; in contrast to the polynomial case, this invariant is often infinite in these rings. 

Any element $f$ of $\bR$ or $\bS$ can be expressed in the form $F(g_1, \ldots, g_s)$ where $F$ is a polynomial and $\nu(g_1, \ldots, g_s)$ is infinite. The 
Ananyan--Hochster principle suggests that if $g_1, \ldots, g_s$ have infinite $\nu$-complexity, then they should behave {\em exactly} like $s$ independent variables.
One of the main theorems of \cite{stillman} verifies this: if $g_1, \ldots, g_s$ have infinite $\nu$-complexity, then they literally are independent variables, up to an isomorphism; that is, the rings $\bR$ and $\bS$ are abstractly isomorphic to polynomial rings (in uncountably many variables; hence ``big''), and under the isomorphism, $g_1, \ldots, g_s$ correspond to distinct variables. These theorems provide idealized forms of the Ananyan--Hochster principle.

These idealized forms are not only aesthetically pleasing statements, they are also useful: one can deduce some of the most important incarnations of the  Ananyan--Hochster principle from them. In fact, the cleanest proof of Stillman's Conjecture, in our opinion, proceeds by first proving the idealized Ananyan--Hochster principle for $\bS$, then deducing an instance of the ordinary Ananyan--Hochster principle from this, and finally deducing Stillman's Conjecture from this. We explain this line of reasoning in the body of the paper.

\medskip

\noindent {\bf IV. $\GL$-noetherianity.} Let $X_d$ be the space of homogeneous polynomials of degree $d$ in variables $x_1, x_2, \ldots$. A homogeneous polynomial of degree $d$ can be written in the form $\sum_{\alpha} c_{\alpha} x^{\alpha}$, where the sum is over multi-indices $\alpha$ of degree $d$ and the $c_{\alpha}$ are complex numbers, all but finitely many of which vanish. We thus see that $X_d$ can be identified with the space of tuples $(c_{\alpha})$, and that $X_d$ is therefore isomorphic to an infinite dimensional complex vector space. The group $\GL_{\infty}$ acts on $X_d$ via linear substitutions in the variables.

Draisma \cite{draisma} proved the following fundamental finiteness result: $X_d$ is a $\GL_{\infty}$-noetherian variety. (More generally, he showed that $X_{d_1} \times \cdots \times X_{d_r}$ is $\GL_{\infty}$-noetherian, for any $d_1, \ldots, d_r$.) 
The precise meaning of this theorem, and the way in which it extends the Hilbert Basis Theorem, is spelled out in \S\ref{s:GLnoeth}.  

Draisma's Theorem is closely related to the Stillman uniformity phenomenon. In \cite{stillman}, we combined Draisma's Theorem and our idealized Ananyan--Hochster principle for $\bR$ to give an entirely different, and more geometric proof, of Stillman's Conjecture.  In \cite{draisma-lason-leykin}, Draisma, Laso\'n, and Leykin gave yet another proof of Stillman's Conjecture, deducing it from Draisma's Theorem and establishing some additional important finiteness results. In \cite{genstillman}, we combined Stillman's Conjecture and Draisma's Theorem to prove a vast generalization of Stillman's Conjecture, where the invariant ``projective dimension'' is replaced by an arbitrary invariant satisfying a few simple axioms. This indicates that Stillman uniformity really is a far reaching phenomenon.

\begin{remark}
Throughout, we will assume that the coefficients of all polynomials are complex numbers.  
It is possible to allow other fields but the discussion becomes more subtle.
Thus, for example, restricting to polynomials with real coefficients would change the discussion somewhat.  See~\S\ref{subsec:field} for a discussion of how the results depend on the field of coefficients.  
\end{remark}

\medskip

This paper is organized as follows:
\begin{itemize}
\item In \S \ref{s:homoseq}--\S \ref{s:principle} we state the Ananyan--Hochster principle in general, as well as several precise incarnations of it. The aim here is to state rigorous and interesting results that require minimal background to understand. No indication of proofs is given.
\item In \S \ref{s:series}--\S \ref{s:germs}, we introduce the rings $\bR$ and $\bS$ and state the idealized forms of the Ananyan--Hochster principle. The main aim is to motivate the introduction of these rings and explain the idealized principle; however, we also give a brief account of the proof, which is entirely elementary.
\item  In \S \ref{s:reggerm}--\S \ref{s:proof of stillman}, we explain the connections between Stillman's Conjecture, the Ananyan--Hochster principle, and the idealized Ananyan--Hochster principle. More precisely, in \S \ref{s:reggerm} we explain how to deduce the most important incarnation of the Ananyan--Hochster principle from the idealized principle for $\bS$. In \S \ref{s:stillman}, we review syzygies in general and precisely formulate Stillman's Conjecture. In \S \ref{s:proof of stillman}, we explain how to deduce Stillman's Conjecture from the incarnation of the Ananyan--Hochster principle established in \S \ref{s:reggerm}. Thus, by the end of \S \ref{s:proof of stillman}, we will have explained all of the key steps in one of the proofs of Stillman's Conjecture.
\item In \S\ref{s:GLnoeth}, we discuss Draisma's Theorem and its connections to Stillman uniformity.
\item Finally, in \S \ref{s:topics}, we briefly review an array of related results and further topics.
\end{itemize}
We hope that any reader with a general mathematical background should be able to follow the material up to \S \ref{s:germs} without great difficulty. The material in \S \ref{s:reggerm}--\S \ref{s:proof of stillman} is more specialized, but we have attempted to make it self-contained. The material in the final two sections relies on more background and is not self-contained, though we have tried to make it as accessible as possible.

\subsection*{Acknowledgments}
We would like to thank Craig Huneke, as his talk at the 2018 JMM Current Events Bulletin about Ananyan and Hochster's work was very influential in this paper.  We also thank Jan Draisma, Mel Hochster, and Jason McCullough for many thought-provoking discussions related to these topics.

\section{Decomposing polynomials} \label{s:homoseq}

There are many sensible ways that one could attempt to measure the complexity of polynomials. For the purposes of this paper, we consider a polynomial to be ``simple'' if it can be decomposed into a small number of lower degree polynomials. To this end, we recall the following definition from the introduction:

\begin{definition}
A homogeneous polynomial $f$ is {\bf $n$-decomposable} if there exist homogeneous polynomials $g_1, \ldots, g_n$ of strictly lower degree and a polynomial $F(X_1, \ldots, X_n)$ such that $f=F(g_1, \ldots, g_n)$. We let the {\bf $\nu$-complexity} of $f$, denoted $\nu(f)$, be the minimal $n$ for which $f$ is $n$-decomposable, with the convention that $\nu(f)=0$ if $f$ is constant and $\nu(f) = \infty$ if $f$ is a non-zero linear form.
\end{definition}

\begin{example} \label{ex:cubic}
If $f=(x_1^2+x_2^2+x_3^2)^3$ then choosing $g_1=x_1^2+x_2^2+x_3^2$ and $F=X_1^3$ shows that $\nu(f)\leq 1$.  But $\nu(f)$ cannot equal $0$ unless $f$ is constant, and thus $\nu(f)=1$.
\end{example}

\begin{example}\label{ex:quadric}
Suppose that $f=x_1^2+\cdots+x_n^2$. We claim that $\nu(f)=n$, i.e., that $f$ is not $(n-1)$-decomposable. To see this, suppose that $f=F(g_1, \ldots, g_m)$ with $m<n$ and each $g_i$ of degree $<2$. The $g_j$'s of degree~0 are simply scalars, and can be absorbed into $F$; thus we can assume each $g_i$ has degree~1. Thus $F(X_1, \ldots, X_m)$ is itself a homogeneous polynomial of degree~2. The expression $f=F(g_1, \ldots, g_m)$ shows that $f$ has rank $\le m$, in the sense of quadratic forms. However, we know that $f$ has rank $n$, which is a contradiction.
\end{example}

\begin{example}\label{ex:n-decomposable}
If $f$ has degree $\geq 2$ and uses the variables $x_1, \ldots, x_n$ then $f$ is necessarily $n$-decomposable, as one can take $g_i=x_i$ and $F=f$.  Hence $\nu(f) \le n$.  However, it can be the case that the $\nu$-complexity of $f$ is much smaller than the number of variables needed to express $f$.  For instance, if $f=(x_1^2+x_2^2+\cdots + x_n^2)^3$, then $f$ requires $n$ variables and yet, as in Example~\ref{ex:cubic}, one can check that $\nu(f)=1$.
\end{example}

A collection of homogeneous polynomials $\{f_1,\dots,f_r\}$ is {\bf $n$-decomposable}, if some non-trivial 
homogeneous linear combination $\alpha_1 f_1 + \cdots + \alpha_r f_r$ is $n$-decomposable (the $\alpha_i$ are complex numbers, not all $0$). As above, we define $\nu(f_1,\dots,f_r)$ to be the minimal $n$ such that $\{f_1,\dots,f_r\}$ is $n$-decomposable.

We note two extreme cases. If the $f_i$ are linearly dependent, then $\nu(f_1,\dots,f_r)= 0$. On the other hand, $\nu(f_1,\dots,f_r) = \infty$ if and only if all $f_i$ are linearly independent forms of degree one.

\begin{remark}\label{rmk:strength}
Ananyan--Hochster \cite{ananyan-hochster} define a homogeneous polynomial $f$ to have {\bf strength} $k$ if there is an expression of the form $f = \sum_{i=0}^k g_i h_i$ where the $g_i$ and $h_i$ are homogeneous polynomials of strictly smaller degree than $f$, and $k$ is minimal as such. Strength and $\nu$-complexity are asymptotically equivalent, in the sense that one is large if and only if the other is.
\end{remark}

\section{The principle of Ananyan--Hochster} \label{s:principle}

In their proof of Stillman's Conjecture \cite{ananyan-hochster}, Ananyan and Hochster discovered the following principle that predicts the behavior of polynomials that have large $\nu$-complexity:

\vskip.5\baselineskip\noindent
{\bf Ananyan--Hochster principle.} If $f_1, \ldots, f_r$ are homogeneous polynomials such that the $\nu$-complexity $\nu(f_1, \ldots, f_r)$ is large compared to $r$ and to the degrees of the $f_i$'s, then $\{f_1, \ldots, f_r\}$ behaves approximately like a set of $r$ independent variables.
\vskip.5\baselineskip\noindent

This is just a principle, not a theorem, since the statement is imprecise. Ananyan and Hochster proved several precise theorems that motivated the statement of this principle, and subsequently more instances of this principle were discovered. In the rest of this section, we look at some of the incarnations of the principle.

The principle can be used to generate precise predictions as follows. Start with a general algebraic property that holds for independent variables. The principle then predicts that if $f_1,\dots,f_r$ are homogeneous polynomials with $\nu(f_1,\dots,f_r)$ sufficiently large (relative to the degrees of $f_1,\dots,f_r$), then $f_1,\dots,f_r$ will also satisfy this property. For example, independent variables are {\bf algebraically independent} in the sense that there are no non-trivial polynomial relations among them. The principle thus predicts that if $f_1, \ldots, f_r$ have sufficiently high $\nu$-complexity then they too should be algebraically independent. In fact, this prediction is correct, and is implied by one of the results proven in \cite{ananyan-hochster}.

Here is a deeper consequence.  A basic fact from linear algebra is that the solution set to $r$ linearly independent linear equations in $x_1,\dots,x_n$ is a subspace of codimension $r$, i.e., is a subspace of dimension $n-r$. The natural generalization of this property to higher degree polynomials often fails. For example, consider the solution set to the equations $xy=xz=0$ in variables $x,y,z$. The solution set contains the subspace $x=0$ and hence has (complex) codimension~$1$, rather than the expected codimension $2$, even though $xy$ and $xz$ are linearly (and even algebraically) independent. We say that homogeneous polynomials $f_1,\dots,f_r$ in variables $x_1,\dots,x_n$ are a {\bf regular sequence} when the locus in $\bC^n$ defined by $f_1=\cdots=f_r=0$ has codimension $r$.
The polynomials in a regular sequence are automatically algebraically independent, and for many applications in commutative algebra and algebraic geometry, this is the most useful notion of independence.

The solution set $x_1=\cdots = x_r=0$ of $r$ independent variables always has codimension $r$.  Thus the Ananyan--Hochster principle suggests the following theorem, which was first proven by Ananyan--Hochster.  It remains one of the most important instances of the general principle, and we will return to it \S\ref{s:reggerm}.

\begin{theorem} \label{thm:reg}
If $f_1, \ldots, f_r$ are homogeneous polynomials of degrees at most $d$ such that $\nu(f_1, \ldots, f_r) \gg d,r$ then $f_1, \ldots, f_r$ is a regular sequence.
\end{theorem}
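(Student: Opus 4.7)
The plan is to prove the contrapositive: if $f_1, \ldots, f_r$ have degrees at most $d$ and fail to form a regular sequence, then $\nu(f_1, \ldots, f_r)$ is bounded by an explicit function $C(d,r)$. Induct on $r$. The base case $r=1$ is immediate, since $\nu(f_1) \geq 1$ forces $f_1$ to be a nonzero polynomial, whose vanishing locus has codimension $1$. For the inductive step, if any proper sub-collection of $\{f_1, \ldots, f_r\}$ already fails to be a regular sequence, induction applied to that sub-collection yields a bound; so we may assume (after reordering) that $f_1, \ldots, f_{r-1}$ is a regular sequence while $f_r$ is a zerodivisor modulo $I = (f_1, \ldots, f_{r-1})$. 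This produces a homogeneous syzygy $h f_r = \sum_{i<r} a_i f_i$ with $h \notin I$, and we choose $h$ of minimal degree with this property.

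The heart of the argument is to extract from this syzygy a bounded-$\nu$ decomposition of some nontrivial linear combination of the $f_i$. Because $f_1, \ldots, f_{r-1}$ is a regular sequence, the $a_i$ are determined modulo $I$ up to Koszul relations, and minimality of $h$ constrains them further. The cases $\deg h < \deg f_r$ and $\deg h \geq \deg f_r$ plausibly require separate treatment: the first by peeling lower-degree factors directly out of the identity $hf_r = \sum a_i f_i$, the second by a secondary induction on $\deg h$ that successively rewrites the syzygy until one is back in the first case. In either case the output is an expression of some $\sum \alpha_i f_i$ through a controlled number of polynomials of smaller degree.

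A conceptually cleaner route, which I expect the paper to take in \S\ref{s:reggerm}, is a limiting argument via the ring $\bS$ of bounded-degree germs. A failure of the theorem would supply a sequence of counterexamples $f_1^{(k)}, \ldots, f_r^{(k)}$ of degrees at most $d$ with $\nu(f_1^{(k)}, \ldots, f_r^{(k)}) \to \infty$ but failing to form regular sequences. Passing to their ultraproduct produces elements of $\bS$ with infinite $\nu$-complexity, which by the idealized Ananyan--Hochster principle are literally independent variables in a polynomial ring; these trivially form a regular sequence. Transferring regularity back through the ultraproduct then contradicts the failure of the $f_i^{(k)}$ to be regular.

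The main obstacle in either approach is the transfer or propagation step. In the direct approach, the degrees of $h$ and the $a_i$ are not a priori bounded, so one must work hard to track how a $\nu$-bound propagates through the syzygy relation and the nested induction on $r$ and degree. In the ultraproduct approach the obstacle is showing that the regular-sequence property, which is phrased in terms of codimensions of vanishing loci, is preserved by the limit; this is most naturally handled by recasting regularity in purely homological terms (exactness of the Koszul complex, or vanishing of specific Tor groups) so that it can be read off algebraically from the ultraproduct.
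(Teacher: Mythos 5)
Your second, ultraproduct route is exactly the paper's argument in \S\ref{s:reggerm}: pass a sequence of would-be counterexamples with $\nu\to\infty$ to jointly indecomposable germs, invoke Corollary~\ref{cor:ultra poly} to see they are literally variables (hence a regular sequence in $\bS$), and descend regularity back to a neighborhood of $\ast$, which is precisely the step the paper delegates to \cite[Corollary~4.10]{stillman}. You have correctly located both the structure of the proof and its one nontrivial transfer step; note only that the ultraproduct argument is inherently ineffective, so the ``explicit function $C(d,r)$'' promised in your opening framing is not actually produced by this route.
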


The above theorem essentially says that the Ananyan--Hochster principle holds for codimension. Here are some other properties for which it holds. In the following list, we assume that $f_1, \ldots, f_r$ are polynomials (in some unspecified number of variables $n$) with $\deg(f_i)\leq d$ for all $1\leq i \leq r$, and that $\nu(f_1, \ldots, f_r) \gg d,r$.
\begin{itemize}
\item Irreducibility and connectedness: the solution set $f_1=\cdots=f_r=0$ is irreducible. (Recall that an algebraic set is {\bf irreducible} if it is not the union of two proper closed algebraic sets. The typical example of a reducible algebraic set is the solution set of $xy=0$, which is the union of the loci defined by $x=0$ and $y=0$.)  In particular, this set is also connected.
\item Primality: the ideal of $\bC[x_1, \ldots, x_n]$ generated by $f_1, \ldots, f_r$ is prime. This is slightly stronger than irreducibility.
\item Smoothness: the set of singular points of the solution set of $f_1=\cdots=f_r=0$ has large codimension. Precisely, given any $c$ the singular locus has codimension $\ge c$ assuming $\nu(f_1,\ldots,f_r) \gg c,d,r$.
\item Cohomology: the solution set of $x_1=\cdots=x_r=0$ is isomorphic to the affine space $\bC^{n-r}$. The compactly supported cohomology of this space is easy to compute: the top group is $\bZ$, and all other groups vanish. The same is true for the algebraic set $X$ defined by $f_1=\cdots=f_r=0$, in the following sense. Given any $k$, the top $k$ compactly supported cohomology groups of $X$ agree with those of $\bC^{n-r}$, assuming $\nu(f_1, \ldots, f_r) \gg k,d,r$.
\end{itemize}
The first three examples in this list follow from~\cite[Theorem~A]{ananyan-hochster}. The one about cohomology follows from the one about smoothness and a result of Dimca; see \cite{kazhdan-schlank}.

\begin{remark}
One cannot expect the Ananyan--Hochster principle to apply to {\em every} property of independent variables.  For instance, independent variables define a solution set $x_1=\cdots=x_r=0$ where every point is smooth; by contrast, for homogeneous polynomials $f_1,\dots,f_r$ of degree $>1$, the solution set will always be singular at the origin of $\bC^n$.  It remains an interesting open problem to determine exactly which incarnations of the Ananyan--Hochster principle are true.  One recent result in this direction is provided by Bik, Draisma, and Eggermont~\cite{bik-draisma-eggermont}, as we discuss in \S\ref{subsec:bde}. 
\end{remark}

\section{Homogeneous series} \label{s:series}

The Ananyan--Hochster principle, as we have formulated it, is an asymptotic statement: as $\nu(f_1, \ldots, f_r)$ grows, the polynomials $f_1, \ldots, f_r$ more closely resemble $r$ independent variables. General mathematical principles suggest that we should try to construct a limiting situation where this approximation becomes exact. In this section, we exhibit one such limiting situation; \S\ref{s:germs} will exhibit another.

\subsection{Homogeneous series}
Recall, from Example~\ref{ex:quadric}, that $\nu(x_1^2+x_2^2+\cdots +x_n^2)=n$.   Taking the limit as $n$ tends to infinity suggests that the formal infinite sum $\sum_{i \ge 1} x_i^2$ should be indecomposable (i.e., not $n$-decomposable for any $n$), assuming that we can rigorously make sense of this statement. We now do just that.

A {\bf homogeneous series of degree $d$} is a formal sum $f=\sum_\alpha c_{\alpha} x^{\alpha}$ where the sum is over all multi-indices $\alpha$ of degree $d$ and the $c_{\alpha}$ are arbitrary complex numbers. (A multi-index is a sequence $\alpha=(\alpha_1, \alpha_2, \ldots)$ of non-negative integers such that all but finitely many are zero. The degree of a multi-index is $\alpha_1+\alpha_2+\cdots$ and $x^{\alpha}$ denotes the monomial $x_1^{\alpha_1} x_2^{\alpha_2} \cdots$.) For example, $\sum_{i \ge 1} x_i^2$ is a homogeneous series of degree~2. 

\begin{definition}
A homogeneous series $f$ is {\bf $n$-decomposable} if there exist homogeneous series $g_1, \ldots, g_n$ of strictly lesser degree and a polynomial $F(X_1, \ldots, X_n)$ such that $f=F(g_1, \ldots, g_n)$.   
We say that $f$ is {\bf indecomposable} if it fails to be $n$-decomposable for all $n$.

We say that homogeneous series $f_1, \ldots, f_r$ are {\bf jointly indecomposable} if every non-trivial homogeneous linear combination of them is indecomposable. An infinite collection of homogeneous series is  {\bf jointly indecomposable} if every finite subcollection is.
\end{definition}
To ensure that this definition makes sense, one needs to check that if $g_1,\dots,g_n$ are homogeneous series, and if $F(X_1,\dots,X_n)$ is appropriately homogeneous, then $F(g_1,\dots,g_n)$ is also a homogeneous series.  This amounts to checking two facts: first, if $f$ and $g$ are homogeneous series then so is $fg$, under the standard product of series; second, any linear combination of homogeneous series of equal degree is again a homogeneous series.  Both are easily verified.

We can also extend the definition of $\nu$ to homogeneous series in the obvious way. However, in this setting we will only ever care about $\nu$ being infinite or finite, not its exact value, and infinite $\nu$-complexity is equivalent to (joint) indecomposability.

\begin{example}
The homogeneous series $f=\sum_{i \ge 1} x_i^2$ is indecomposable.
\end{example}

\begin{example}
Let $f_1 = \sum_{i\geq 1} x_i$ and $f_2 = \sum_{i\geq 1} x_i^2$.  Since $f_1$ and $f_2$ have different degrees, any homogeneous linear combination $\alpha_1f_1+\alpha_2f_2$ must have either $\alpha_1=0$ or $\alpha_2=0$.  Since $f_1$ is linear and $f_2$ is indecomposable (by the previous example), it follows that $f_1$ and $f_2$ are jointly indecomposable.
\end{example}

\begin{example}\label{ex:dth powers}
For each $d\geq 1$, let $f_d=\sum_{i\geq 1} x_i^d$.  It turns out that each $f_d$ is indecomposable.  Since the $f_d$ all have different degrees, any homogeneous linear combination of the $f_d$ will be a scalar multiple of one of the $f_d$.  It follows that the infinite set $\{f_1,f_2,\dots\}$ is jointly indecomposable.
\end{example}

\begin{example}
Let $f_1=\sum_{i \ge 1} x_i^2$ and $f_2 = \sum_{i \ge 1} i x_i^2$. One can show that $f_1$ and $f_2$ are jointly indecomposable.
\end{example}

\subsection{The main theorem}

We introduced homogeneous series with the hope that we could replace the asymptotic form of the Ananyan--Hochster principle with something more precise. We now see our hopes realized.

\begin{theorem} \label{thm:algindseries}
Any collection of jointly indecomposable homogeneous series of positive degree is algebraically independent.
\end{theorem}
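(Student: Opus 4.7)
The plan is to induct on the size $r$ of a finite subfamily; algebraic independence of an infinite collection reduces to the finite case because any polynomial identity involves only finitely many variables.

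For the base case $r=1$, I would first verify that $\bR$ is an integral domain by restricting any two nonzero homogeneous series to the first $n$ variables for $n$ large enough that both restrictions are nonzero, and then invoking the fact that $\bC[x_1,\ldots,x_n]$ is a domain. Given an indecomposable $f_1$ of positive degree $d$ (in particular $f_1\neq 0$), a hypothetical relation $\sum c_k f_1^k = 0$ splits into graded components $c_k f_1^k \in \bR_{kd}$ for distinct values of $k$; each component vanishes, and the domain property together with $f_1 \neq 0$ forces every $c_k=0$.

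For the inductive step, suppose $f_1,\ldots,f_r$ are jointly indecomposable and $P(f_1,\ldots,f_r)=0$ for some nonzero $P$ chosen of minimal total degree. After replacing $P$ by a nonzero weighted-homogeneous component with respect to the weights $d_i := \deg f_i$ (which still vanishes at $(f_1,\ldots,f_r)$ because $\bR$ is graded), assume $P$ is weighted-homogeneous of weight $D$. The inductive hypothesis applied to the still jointly indecomposable subfamily $f_1,\ldots,f_{r-1}$ yields their algebraic independence, forcing $P$ to involve $X_r$. Setting $h_i := (\partial_i P)(f_1,\ldots,f_r)$, each $h_i$ is a polynomial expression in the $f_j$'s of strictly smaller total polynomial degree; by minimality, $h_i=0$ would force $\partial_i P=0$ as a polynomial, so not all the $h_i$ can vanish (else $P$ would be a constant relation, impossible since $P\neq 0$). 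Euler's identity $\sum_i d_i X_i \partial_i P = D\cdot P$ evaluated at the $f_j$'s then yields the identity $\sum_i d_i f_i h_i = 0$ in $\bR$.

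The main obstacle is to convert this last identity into an explicit decomposition of a nonzero linear combination $\sum \alpha_i f_i$ as a polynomial in finitely many strictly lower-degree homogeneous series, which would contradict joint indecomposability. For $r=2$ this step is straightforward: any weighted-homogeneous $P(X_1,X_2)$ factors over $\bC$ into weighted-homogeneous factors of strictly smaller weight, and the domain property picks out one such factor vanishing at $(f_1,f_2)$; selecting a linear factor yields a nonzero linear combination equal to zero, trivially decomposable. For $r\geq 3$, $P$ may be irreducible, and I would instead view $P$ as a polynomial in $X_r$ with leading coefficient $Q_N(X_1,\ldots,X_{r-1})$, nonzero upon evaluation at $(f_1,\ldots,f_{r-1})$ by the inductively established algebraic independence; the leading relation $Q_N(f)\cdot f_r^N = -\sum_{k<N} Q_k(f)\cdot f_r^k$ combined with the Euler relation among the $h_i$'s should let one exhibit a suitable linear combination of the $f_i$'s as a polynomial expression in the finitely many strictly lower-degree elements $\{h_i\}\cup\{Q_k(f_1,\ldots,f_{r-1})\}$. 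Locating the right combination and verifying the degree bound in this subring is the technically delicate step that I expect to be the heart of the proof.
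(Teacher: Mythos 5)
Your proposal has a genuine gap at exactly the step you flag as ``technically delicate,'' and it is not a technicality: it is the heart of the matter, and the route you have set up does not lead to it. To contradict joint indecomposability you must exhibit a nontrivial \emph{scalar} homogeneous linear combination $\sum_i\alpha_i f_i$ written as a polynomial in finitely many homogeneous series of degree \emph{strictly less} than $\deg(\sum_i\alpha_i f_i)$. But every auxiliary element your argument produces --- $h_i=(\partial_i P)(f_1,\dots,f_r)$ and $Q_k(f_1,\dots,f_{r-1})$ --- is itself a polynomial expression in the $f_j$'s, of degree $D-d_i$ (resp.\ $D-kd_r$), which is in general far \emph{larger} than the $d_i$. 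Nothing in the Euler relation $\sum_i d_i f_i h_i=0$ or in the leading-coefficient relation supplies even one series of degree $<\min_i d_i$, so there is no raw material from which a decomposition could be assembled. The same defect already sinks your $r=2$ case when $d_1\neq d_2$: a weighted-homogeneous $P(X_1,X_2)$ such as $X_1^3-X_2^2$ (weights $2,3$) is irreducible with no linear factor, and since a homogeneous linear combination of series of different degrees forces one coefficient to vanish, joint indecomposability here only says each $f_i$ is individually indecomposable --- your argument extracts no contradiction from a relation like $f_1^3=f_2^2$.

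The paper's proof supplies precisely the missing mechanism: differentiate the relation $F(f_1,\dots,f_r)=0$ with respect to the \emph{ambient} variables $x_i$ rather than the formal variables $X_i$. The chain rule gives $\sum_{j} F_j(f_1,\dots,f_r)\,\partial f_j/\partial x_i=0$, a relation of smaller total degree among the $2r$ series $f_j$ and $\partial f_j/\partial x_i$; crucially $\partial f_j/\partial x_i$ has degree $d_j-1<d_j$, and these are the genuinely lower-degree series that ultimately witness a decomposition if the original relation were nontrivial. The induction there is on the total degree of the relation, not on $r$. Your base case (a nonzero positive-degree element of the graded domain $\bR$ is transcendental over $\bC$) and your reduction to a weighted-homogeneous $P$ of minimal degree are both fine and consistent with the paper's setup, but without differentiation in the $x_i$'s the inductive step cannot be closed.
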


For example, the $d$th power sums $f_d$ from Example~\ref{ex:dth powers} are algebraically independent.  The theorem is equivalent to two other noteworthy statements that we give as corollaries.

\begin{corollary}
Let $\{g_i\}_{i \in \cI}$ be a maximal set of jointly indecomposable homogeneous series of positive degree, where $\cI$ is an index set. Given any homogeneous series $f$ there exist distinct indices $i_1, \ldots, i_n \in \cI$ and a polynomial $F\in \bC[X_1,\dots,X_n]$ such that $f=F(g_{i_1}, \ldots, g_{i_n})$. Moreover, this expression is unique up to applying a permutation to $i_1, \ldots, i_n$ and the inverse permutation to $F$.
\end{corollary}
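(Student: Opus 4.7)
The plan is to prove the corollary directly from Theorem~\ref{thm:algindseries}, treating existence by induction on degree and uniqueness as a routine consequence of algebraic independence.

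For existence, I would induct on the degree $d$ of $f$. The case $d = 0$ is trivial: $f$ is a scalar, so take $n = 0$ and let $F$ be the constant polynomial equal to $f$. For $d \geq 1$, maximality of $\{g_i\}_{i \in \cI}$ among jointly indecomposable collections of positive-degree homogeneous series forces $\{g_i\} \cup \{f\}$ to fail joint indecomposability (and if $f$ happens to equal some $g_i$ the result is immediate). Hence there is a nontrivial homogeneous linear combination of $f$ and finitely many of the $g_i$'s that is decomposable. The coefficient of $f$ in this combination must be nonzero, for otherwise what remains is a nontrivial decomposable linear combination purely among the $g_i$'s, contradicting the joint indecomposability of $\{g_i\}$. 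Rescaling, we may write
\[ f + \sum_{l} \beta_l g_{j_l} = F'(h_1, \ldots, h_m) \]
for some polynomial $F'$ and homogeneous series $h_1, \ldots, h_m$ of degree strictly less than $d$. Applying the induction hypothesis to each $h_k$ expresses it as a polynomial in finitely many of the $g_i$'s; substituting and consolidating repeated indices yields the desired expression $f = F(g_{i_1}, \ldots, g_{i_n})$ with distinct $i_l$. The edge case $d = 1$ is consistent: each $h_k$ is then a scalar, so $F'(h_1, \ldots, h_m)$ is a scalar, which by homogeneity must vanish, giving $f = -\sum_l \beta_l g_{j_l}$.

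For uniqueness, suppose $f = F(g_{i_1}, \ldots, g_{i_n}) = G(g_{j_1}, \ldots, g_{j_m})$ with distinct indices in each tuple and with $F$ and $G$ arranged to genuinely involve all of their variables. Let $\{k_1, \ldots, k_p\}$ be the union of the two index sets, and pad $F$ and $G$ with zero coefficients to obtain polynomials $\tilde F$ and $\tilde G$ in $p$ variables satisfying $\tilde F(g_{k_1}, \ldots, g_{k_p}) = \tilde G(g_{k_1}, \ldots, g_{k_p})$. By Theorem~\ref{thm:algindseries} the $g_{k_l}$ are algebraically independent, so $\tilde F = \tilde G$ as polynomials. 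Since $F$ uses each of $X_1, \ldots, X_n$ and $G$ uses each of $X_1, \ldots, X_m$, the two index sets must coincide, and $F$ and $G$ then agree up to the induced permutation of variables.

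The main obstacle is the inductive step of the existence argument: one must use maximality to extract a decomposable combination involving $f$, and then observe that the coefficient of $f$ must be nonzero, which is exactly where joint indecomposability of $\{g_i\}$ itself is used. Everything else is essentially bookkeeping, since the $h_k$ have strictly smaller degree than $f$, allowing the induction on degree to proceed cleanly.
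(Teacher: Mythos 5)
Your argument is correct and is essentially the derivation the paper intends (the paper states this corollary without proof, asserting only its equivalence to Theorem~\ref{thm:algindseries}): existence follows from maximality by induction on degree, with the key observation that the decomposable combination must involve $f$ with nonzero coefficient, and uniqueness follows from algebraic independence. The only point worth flagging is that the uniqueness claim is literally true only under the convention, which you correctly impose, that $F$ genuinely involves all of its variables.
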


To state our second corollary, we must introduce a new object. A {\bf bounded degree series} is a finite sum of homogeneous series, of possibly different degrees. Let $\bR$ be the set of all bounded degree series. As any sum or product of bounded degree series is again a bounded degree series, we see that $\bR$ forms a commutative ring. Theorem~\ref{thm:algindseries} translates into the following precise description of the structure of $\bR$:

\begin{corollary}\label{cor:inv limit}
The ring $\bR$ is abstractly a polynomial ring. Precisely, let $\{g_i\}_{i \in \cI}$ be a maximal set of jointly indecomposable homogeneous series of positive degree. Then $\bR$ is isomorphic to the polynomial ring $\bC[X_i]_{i \in \cI}$ with variables indexed by $\cI$. The isomorphism takes a polynomial $F(X_i)_{i \in \cI}$ to the bounded degree series $F(g_i)_{i \in \cI}$ obtained by substituting $g_i$ for $X_i$ for all $i$.
\end{corollary}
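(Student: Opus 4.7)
The plan is to produce the ring homomorphism $\Phi\colon \bC[X_i]_{i\in\cI} \to \bR$ sending $X_i \mapsto g_i$ and show it is bijective. Injectivity is immediate from Theorem~\ref{thm:algindseries}: the family $\{g_i\}$ is algebraically independent, which is exactly the statement that $\Phi$ has trivial kernel.

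For surjectivity, let $A = \Phi(\bC[X_i]_{i\in\cI})$. Since every element of $\bR$ is by definition a finite sum of homogeneous series, it suffices to show every homogeneous series $f$ lies in $A$, and I would prove this by induction on $\deg f$. The base case $\deg f = 0$ is immediate since $\bC \subset A$. For the inductive step, fix $f$ of degree $d \geq 1$. If $f$ lies in the $\bC$-span of those $g_i$ of degree $d$, we are done; otherwise, by the maximality of $\{g_i\}$, the enlarged family $\{f\}\cup\{g_i\}$ fails to be jointly indecomposable, so some non-trivial homogeneous linear combination $h = \alpha f + \sum_j \beta_j g_{i_j}$ (with each $g_{i_j}$ of degree $d$) is not indecomposable—meaning $h$ is either zero or decomposable.

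I would next verify that the leading coefficient $\alpha$ must be non-zero: otherwise the relation would already witness either a non-trivial linear dependence among the $g_i$ or a non-trivial decomposable combination of them, each contradicting their joint indecomposability (recalling that the zero series is $0$-decomposable, so joint indecomposability forces linear independence). Similarly $h$ itself must be non-zero and hence genuinely decomposable, for $h = 0$ together with $\alpha \neq 0$ would place $f$ in the $\bC$-span of the degree-$d$ $g_i$'s, contrary to our case assumption. Writing $h = F(h_1,\ldots,h_m)$ with each $h_k$ homogeneous of degree strictly less than $d$, the inductive hypothesis places each $h_k$, and hence $h$, in $A$; therefore $f = \alpha^{-1}\bigl(h - \sum_j \beta_j g_{i_j}\bigr) \in A$ as desired.

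The entire argument rests on Theorem~\ref{thm:algindseries}, so the real obstacle is already behind us; the structural content of this corollary is essentially formal once algebraic independence is in hand. The only genuine bookkeeping concerns the two degenerate possibilities handled above (vanishing $\alpha$ and vanishing $h$), both dispatched by joint indecomposability of the $\{g_i\}$. The existence of a maximal jointly indecomposable family of positive-degree homogeneous series is a routine Zorn's lemma application.
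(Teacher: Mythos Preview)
Your proof is correct and follows exactly the approach the paper intends: the paper presents this statement as an immediate corollary of Theorem~\ref{thm:algindseries} without writing out a proof, and your argument---injectivity from algebraic independence, surjectivity by induction on degree using maximality---is precisely the standard unpacking of that implication. The bookkeeping you flag (nonzero $\alpha$, nonzero $h$) is handled correctly; note that in degree~$1$ the ``$h$ nonzero and decomposable'' branch is vacuous (a nonzero linear form is never decomposable), so the case assumption forces $f$ into the span of the degree-$1$ $g_i$'s, consistent with your argument.
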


This final corollary meets our goal of finding a precise form of the Ananyan--Hochster principle: it shows that jointly indecomposable homogeneous series are literally independent variables (up to an isomorphism). 

\begin{remark}
The index set $\cI$ in Corollary~\ref{cor:inv limit} is always of uncountable cardinality, and so there are uncountably many variables in the ring $\bC[X_i]_{i \in \cI}$.
\end{remark}

\subsection{Back to polynomials}\label{subsec:back to polynomials}
While this idealized Ananyan--Hochster principle for homogeneous series provides some helpful conceptual clarity, we would really like to use it to also derive results for polynomials, such as those in \S\ref{s:principle}.  Unfortunately, the most direct approach to doing so does not really work.

For example, suppose we wanted to try to prove Theorem~\ref{thm:reg}. Thus we start with a sequence $f_{1,i},\ldots,f_{r,i}$ of tuples of polynomials indexed by $i \in \bN$ with $\nu$-complexity tending to infinity, and we would like to show that the $i$th tuple is a regular sequence for $i \gg 0$. The most direct approach would be to somehow define homogeneous series $f_1, \ldots, f_r$ by taking the limit of $f_{1,i}, \ldots, f_{r,i}$ as $i \to \infty$, then apply the results of this section to conclude that $f_1, \ldots, f_r$ forms a regular sequence, and finally argue that this implies $f_{1,i},\ldots,f_{r,i}$ form a regular sequence for all $i\gg 0$. The problem with this approach is in forming the limit: given an arbitrary sequence $(g_i)_{i \ge 1}$ of polynomials, there is not necessarily a reasonable limiting homogeneous series. (For example, consider the case where $g_i$ is simply $x_i$ itself.)

It turns out that one \emph{can} apply Corollary~\ref{cor:inv limit} to the study of polynomials (see the proof of Theorem~\ref{thm:stillmanR}), but the connection is much more subtle than the approach outlined above. One of the great advantages of our second idealized Ananyan--Hochster principle, discussed in the following section, is that it does directly connect to polynomials.

\subsection{Proof of the main theorem}\label{subsec:proof 4}
The proof of Theorem~\ref{thm:algindseries} we give in \cite{stillman} is short and entirely elementary. We now give some indication of the main idea. Consider a hypothetical algebraic relation
\begin{displaymath}
F(f_1, \ldots, f_r) = 0
\end{displaymath}
where $F(X_1, \ldots, X_r)$ is a polynomial and $f_1, \ldots, f_r$ are jointly indecomposable homogeneous series. Now differentiate this equation with respect to some variable, say $x_i$. (We differentiate homogeneous series termwise.) By the chain rule, we obtain
\begin{equation}\label{eq:neweqn}
\sum_{j=1}^r F_j(f_1, \ldots, f_r) \frac{\partial f_j}{\partial x_i} = 0,
\end{equation}
where $F_j=\frac{\partial F}{\partial X_j}$ is the $j$th partial derivative of $F(X_1, \ldots, X_r)$. This is an algebraic relation among the $2r$ homogeneous series $f_1, \ldots, f_r, \partial f_1/\partial x_i, \ldots, \partial f_r/\partial x_i$.  Its total degree is one less than that of the original relation. Thus, arguing inductively on the total degree, we can assume that this relation is trivial. (There is a subtlety here: the $2r$ series in this relation may no longer be jointly indecomposable. However, one can express them in terms of some set of jointly indecomposable series, and then, after making these substitutions, the resulting algebraic relation is trivial.)  As this holds for all choices of the variable $x_i$, one can conclude that the original relation is trivial. We refer the reader to \cite[\S2]{stillman} for the details.

\section{Homogeneous germs} \label{s:germs}
We now describe a second way to construct a limiting setting in which the Ananyan--Hochster principle becomes exact, using techniques from model theory. Compared to the approach of the previous section, this approach is more technical, but it has the advantage of applying more directly to ordinary polynomials.

\subsection{Homogeneous sequences}

As the Ananyan--Hochster principle is concerned with the limiting behavior of homogeneous polynomials of fixed degree, it is natural to introduce the following definition:

\begin{definition}
A {\bf homogeneous sequence} is a sequence $f_{\bullet} = (f_1,f_2,f_3,\dots)$ of homogeneous polynomials, all of the same degree, which we refer to as the {\bf degree} of $f_{\bullet}$.
\end{definition}

Thus we are interested in the limiting behavior of homogeneous sequences. The primary technical problem here is the same problem that one encounters when studying limits of any kind: they need not be defined. In other words, a homogeneous sequence may exhibit one kind of limiting behavior along one subsequence, and another along another. For example, consider the homogeneous sequence $f_{\bullet}$ given by
\begin{equation} \label{eq:exf}
f_i = \begin{cases}
x_2^2+\cdots+x_i^2 & \text{if $i$ is even} \\
x_1^2 & \text{if $i$ is odd} \end{cases}
\end{equation}
This has $\nu(f_i)=1$ for $i$ odd and $\nu(f_i)=i$ for $i$ even; thus on odd integers, $f_{\bullet}$ decomposes uniformly, but on the even integers it does not. This is but one example of what can go wrong. Here is another: one can have a homogeneous series $f_{\bullet}$ with $\nu(f_i) \le 2$ for all $i$, so that each $f_i$ can be written as a function of two lower degree polynomials; however, it could be that for $i$ odd these two polynomials each have degree~1, while for $i$ even they each have degree~2. Thus, writing $f_i=F_i(g_{1,i}, g_{2,i})$, the sequences $g_{1,\bullet}$ and $g_{2,\bullet}$ are not homogeneous sequences, since they do not have constant degree. In other words, decomposing a uniformly decomposable homogeneous sequence might take us outside of the world of homogeneous sequences.

These issues may seem like mere annoyances, but when carrying out complicated operations on homogeneous sequences they compound and create so much bookkeeping as to obscure the main ideas. It is therefore highly desirable to come up with a way to have well-defined limiting behavior.

Let us return to the example of \eqref{eq:exf}, where the sequence $f_{\bullet}$ has different behavior on even and odd integers. Imagine that the even integers converged to some point $p$, and the odd integers to some point $q$. We could then say that $f_{\bullet}$ exhibits one type of behavior near $p$, and another type near $q$. This suggests that we should try to work with our homogeneous sequences locally with respect to some topology on the index set $\bN$.

Of course, this raises the question of which topology to use. In fact, there is a ``best'' choice: the Stone--\v{C}ech compactification. The Stone--\v{C}ech compactification $\beta X$ of a topological space $X$ is the universal compact Hausdorff space that admits a continuous map from $X$. We are mainly interested in the Stone--\v{C}ech compactification $\beta \bN$ of the discrete topological space $\bN$. It is not difficult to see that $\beta \bN$ is totally disconnected and that $\bN$ is a dense subset of $\beta \bN$. This is essentially all that one needs to know of $\beta \bN$ for our discussion.

Since $\beta \bN$ is compact, every sequence in it has a limit point. Thus, in the context of \eqref{eq:exf}, one can essentially do what we had hoped, and work with the limiting points $p$ and $q$; this is only slightly incorrect, due to the fact that the even and odd integers will not have unique limiting points.

This discussion suggests that we should study the local behavior of homogeneous sequences at some point of $\beta \bN$. This turns out to work very well, and is what we will do in the following subsection. However, before beginning that discussion we give a more direct description of the points of $\beta \bN$ that does not rely on topology.

\begin{definition}
  Let $X$ be a set. An {\bf ultrafilter} on $X$ is a collection $\cU$ of subsets of $X$ satisfying the following conditions:
\begin{itemize}
\item Given $A \subset B \subset X$ with $A \in \cU$ we have $B \in \cU$.
\item Given $A, B \in \cU$ we have $A \cap B \in \cU$.
\item Given $A \subset X$, either $A \in \cU$ or $X \setminus A \in \cU$.
\item The empty set does not belong to $\cU$.
\end{itemize}
Given $x \in X$, the collection $\cU$ of subsets of $X$ containing $x$ is an ultrafilter on $X$, called the {\bf principal ultrafilter} at $x$.
\end{definition}

If $x$ is a point in the Stone--\v{C}ech compactification $\beta X$ (regarding $X$ as a discrete space) then the collection
\begin{displaymath}
\{ U \cap X \mid \text{$U$ is an open neighborhood of $x$ in $\beta X$} \}
\end{displaymath}
is an ultrafilter on $X$. In fact, this gives a bijection between $\beta X$ and the set of ultrafilters on $X$, with elements of $X \subset \beta X$ corresponding to principal ultrafilters. Thus, instead of working with a point of $\beta \bN$, we can work with an ultrafilter on $\bN$, and this is what we actually do. Principal ultrafilters do not lead to an interesting theory, so we will only use non-principal ultrafilters. We note, however, that the existence of non-principal ultrafilters relies on the axiom of choice; in particular, one cannot write down an example of one explicitly.

\subsection{Homogeneous germs}
Fix a non-principal ultrafilter on $\bN$. We let $\ast$ denote the corresponding point of $\beta \bN\setminus \bN$ and we refer to subsets in the ultrafilter as ``neighborhoods of $\ast$.''

\begin{definition} \label{defn:decompnear}
We say that a homogeneous sequence $f_{\bullet}$ is {\bf decomposable near $\ast$} if there exist homogeneous sequences $g_{1,\bullet}, \ldots, g_{n,\bullet}$ of degree strictly less than that of $f_{\bullet}$, and polynomials $F_{\bullet}$, such that $f_i=F_i(g_{1,i}, \ldots, g_{n,i})$ holds for all $i$ in some neighborhood of $\ast$.
\end{definition}

\begin{example}
Let $f_{\bullet}$ be the homogeneous sequence given by
\begin{displaymath}
f_i=\begin{cases}
x_1^2+\cdots+x_i^2 & \text{if $i$ is even} \\
x_1^2 & \text{if $i$ is odd} \end{cases}.
\end{displaymath}
This sequence may or may not be decomposable near $\ast$, depending on what point of $\beta \bN$ we have chosen for $\ast$. If the even numbers form a neighborhood of $\ast$ then $f_{\bullet}$ is indecomposable near $\ast$; if the odd numbers form a neighborhood of $\ast$ then $f_{\bullet}$ is decomposable near $\ast$. Exactly one of these two possibilities holds by the axioms for ultrafilters.
\end{example}

Whether or not $f_{\bullet}$ is decomposable near $\ast$ only depends on its local behavior near $\ast$. From analysis and sheaf theory, we know that to study local behavior we should consider germs. We therefore make the following definition:

\begin{definition}
We define an equivalence relation $\sim$ on homogeneous sequences as follows: we declare $f_{\bullet} \sim g_{\bullet}$ if $f_i=g_i$ for all $i$ in some neighborhood of $\ast$. A {\bf homogeneous germ} is an equivalence class of homogeneous sequences. For a homogeneous sequence $f_{\bullet}$, we let $[f_{\bullet}]$ denote the homogeneous germ that it defines.
\end{definition}

Homogeneous germs are well-behaved, from a formal point of view. Indeed, suppose that $[f_{\bullet}]$ and $[g_{\bullet}]$ are homogeneous germs. We can then define their product $[f_{\bullet}][g_{\bullet}]$ to be the homogeneous germ $[f_{\bullet} g_{\bullet}]$, where $f_{\bullet} g_{\bullet}$ is defined pointwise. One easily verifies that this is indeed well-defined. Similarly, we can define addition of homogeneous germs of the same degree. We have the following interesting observation:

\begin{proposition}
The set of homogeneous germs of degree~$0$ forms a field, under the addition and multiplication laws just defined.
\end{proposition}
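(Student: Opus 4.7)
The plan is to identify the set of degree-$0$ homogeneous germs with the ultrapower $\bC^{\bN}/\cU$, and then to verify the field axioms by translating the defining properties of an ultrafilter into statements about germs.

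First, observe that a homogeneous polynomial of degree~$0$ is simply a complex number, so a degree-$0$ homogeneous sequence is just a sequence $c_{\bullet}=(c_1,c_2,\dots)$ of complex numbers, and two such sequences are equivalent precisely when they agree on a neighborhood of $\ast$. The addition and multiplication defined above are the pointwise operations, so the commutative ring axioms (associativity, distributivity, etc.) are inherited from $\bC$. The zero element is the class of the constant sequence $0$ and the unit is the class of the constant sequence $1$; these are distinct germs since the set on which they agree is empty, and the empty set does not belong to $\cU$.

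The main step is to produce a multiplicative inverse for an arbitrary nonzero germ $[c_{\bullet}]$. Being nonzero means that the set $Z=\{i:c_i=0\}$ is \emph{not} a neighborhood of $\ast$ (otherwise $c_{\bullet}$ would agree with the constant sequence $0$ on a neighborhood). By the third ultrafilter axiom, exactly one of $Z$ and $N=\bN\setminus Z=\{i:c_i\neq 0\}$ belongs to $\cU$, so $N$ is a neighborhood of $\ast$. Define $d_{\bullet}$ by $d_i=c_i^{-1}$ for $i\in N$ and $d_i=0$ for $i\in Z$. Then $c_id_i=1$ for every $i\in N$, so $[c_{\bullet}][d_{\bullet}]=[c_{\bullet}d_{\bullet}]$ agrees with the constant sequence $1$ on $N$, and hence equals the unit germ.

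There is no real obstacle here; the entire content of the argument is the observation that the third ultrafilter axiom — which is precisely what separates ultrafilters from ordinary filters — is exactly the ingredient needed to pass from ``not zero'' to ``nonzero on a neighborhood,'' and hence to invertibility. (From a model-theoretic vantage point this is just the statement of \L{}o\'s's theorem applied to the first-order sentence expressing that $\bC$ is a field, but the direct argument above avoids invoking that machinery.)
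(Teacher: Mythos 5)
Your proof is correct and follows essentially the same route as the paper's: identify nonzeroness of a germ with the complement of the zero set being a neighborhood of $\ast$ (via the third ultrafilter axiom), and invert pointwise on that neighborhood. The only cosmetic difference is your choice of $d_i=0$ rather than $1$ on the zero set, which is immaterial since that set is not a neighborhood of $\ast$.
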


\begin{proof}
It is an easy exercise to see that the addition and multiplication laws endow the set of homogeneous germs of degree~0 with the structure of a commutative ring. Let us explain why it is a field. Thus suppose that $[\alpha_{\bullet}]$ is a non-zero homogeneous germ of degree~0. We must show that it has a reciprocal.

What does it mean that $[\alpha_{\bullet}]$ is non-zero? It means that it is not equal to the zero homogeneous germ, which by definition, is the homogeneous germ that is the identity for addition. Clearly, this is the homogeneous germ $[0_{\bullet}]$ where $0_{\bullet}$ is the homogeneous sequence given by $0_i=0$ for all $i$. Our hypothesis is thus $[\alpha_{\bullet}] \ne [0_{\bullet}]$. By definition, this means that $\alpha_{\bullet}$ and $0_{\bullet}$ are inequivalent under $\sim$. Thus, if $U$ denotes the set of indices $i \in \bN$ for which $\alpha_i=0$, then $U$ is not a neighborhood of $\ast$. But, by the axioms of ultrafilters, this means its complement $V=\bN \setminus U$ is a neighborhood of $\ast$. In other words, $\alpha_i$ is a non-zero complex number for all $i$ in the neighborhood $V$ of $\ast$.

Now, define a homogeneous sequence $\beta_{\bullet}$ by
\begin{displaymath}
\beta_i=\begin{cases}
\alpha_i^{-1} & \text{for $i \in V$} \\
1 & \text{for $i \in U$} \end{cases}.
\end{displaymath}
Then $\alpha_i \beta_i=1$ for all $i$ in the neighborhood $V$, and so $[\alpha_{\bullet}] [\beta_{\bullet}]=[1_{\bullet}]$, where $1_{\bullet}$ is the homogeneous sequence with $1_i=1$ for all $i$. As $[1_{\bullet}]$ is clearly the multiplicative unit for homogeneous germs, we see that $[\beta_{\bullet}]$ is the reciprocal of $[\alpha_{\bullet}]$, and so the proposition follows.
\end{proof}

Let ${}^*\bC$ be the field of homogeneous germs of degree~0. This field is the {\bf ultrapower} of the field $\bC$ of complex numbers, sometimes called the field of {\it hypercomplex numbers}. It is an enormous field---it is an algebraically closed extension of $\bC$ of uncountable degree---and hard to really picture. However, it is easy to work with ${}^*\bC$ in a formal sense: its elements are simply sequences of complex numbers up to the equivalence relation $\sim$.

The significance of ${}^*\bC$ to the present discussion is that it is the appropriate field of scalars for working with homogeneous germs. Indeed, if $[\alpha_{\bullet}] \in {}^*\bC$ and $[f_{\bullet}]$ is a homogeneous germ of degree $d$ then $[\alpha_{\bullet}] [f_{\bullet}]=[\alpha_{\bullet} f_{\bullet}]$ is again a homogeneous germ of degree $d$. Thus the set of homogeneous germs of degree $d$ is a vector space over ${}^*\bC$. Furthermore, we find that any polynomial expression in homogeneous germs with coefficients in ${}^*\bC$, and that is appropriately homogeneous, is again a homogeneous germ. The following definition is therefore forced onto us by analogy with our previous ones:

\begin{definition} \label{defn:decompgerm}
A homogeneous germ $[f_{\bullet}]$ is {\bf $n$-decomposable} if there exist homogeneous germs $[g_{1,\bullet}], \ldots, [g_{n,\bullet}]$ and a polynomial $F(X_1, \ldots, X_n)$ with coefficients in ${}^*\bC$ such that $[f_{\bullet}]=F([g_{1,\bullet}], \ldots, [g_{n,\bullet}])$; it is {\bf indecomposable} if it fails to be $n$-decomposable for all $n$.  A collection of homogeneous germs of positive degree is {\bf jointly indecomposable} if every non-trivial homogeneous ${}^*\bC$-linear combination is indecomposable.
\end{definition}

The following proposition, which we leave as an exercise, connects the two notions of decomposability introduced in this section.

\begin{proposition}
Let $f_{\bullet}$ be a homogeneous sequence. Then $[f_{\bullet}]$ is decomposable (in the sense of Definition~\ref{defn:decompgerm}) if and only if $f_{\bullet}$ is decomposable near $\ast$ (in the sense of Definition~\ref{defn:decompnear}).
\end{proposition}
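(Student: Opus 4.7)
The plan is to observe that both definitions are ultimately packaging the same piece of data---a polynomial identity $f_i = F_i(g_{1,i},\dots,g_{n,i})$ that holds on a neighborhood of $\ast$---and that the passage between them is mediated entirely by the identification of ${}^*\bC$ with sequences of complex numbers modulo the ultrafilter. The key preparatory observation is that, because every expression in sight must be weighted-homogeneous of degree $d := \deg[f_\bullet]$ in variables $X_j$ weighted by $d_j := \deg[g_{j,\bullet}] < d$, the polynomial $F$ (in the germ setting) and each polynomial $F_i$ (in the pointwise setting) may be assumed to be supported on the finite set $A$ of multi-indices $\alpha = (\alpha_1,\dots,\alpha_n)$ satisfying $\sum_j \alpha_j d_j = d$. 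Any non-$A$ component of $F$ or $F_i$ evaluates to a homogeneous expression of the wrong degree, and hence must vanish individually by uniqueness of the graded decomposition.

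For the $(\Leftarrow)$ direction, I start with homogeneous sequences $g_{j,\bullet}$ and a polynomial sequence $F_\bullet$ such that $f_i = F_i(g_{1,i},\dots,g_{n,i})$ on a neighborhood $V$ of $\ast$. After the reduction above, I may assume $F_i = \sum_{\alpha\in A} c_{i,\alpha} X^\alpha$ for $i\in V$. Set $c_\alpha := [c_{\bullet,\alpha}]\in {}^*\bC$ and $F(X) := \sum_{\alpha\in A} c_\alpha X^\alpha$. A direct unpacking of the pointwise operations on germs then gives $F([g_{1,\bullet}],\dots,[g_{n,\bullet}]) = [F_\bullet(g_{1,\bullet},\dots,g_{n,\bullet})] = [f_\bullet]$, where the last equality uses that the two sequences agree on $V$.

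For the $(\Rightarrow)$ direction, I start from $[f_\bullet] = F([g_{1,\bullet}],\dots,[g_{n,\bullet}])$ with $F = \sum_{\alpha\in A} c_\alpha X^\alpha$. I lift each hypercomplex coefficient $c_\alpha$ to a representative sequence $c_{\bullet,\alpha}$ of complex numbers and set $F_i := \sum_{\alpha\in A} c_{i,\alpha} X^\alpha$. The same pointwise computation shows that the sequence $F_\bullet(g_{1,\bullet},\dots,g_{n,\bullet})$ represents the germ $F([g_{1,\bullet}],\dots,[g_{n,\bullet}]) = [f_\bullet]$; by the definition of $\sim$, this means precisely that $f_i = F_i(g_{1,i},\dots,g_{n,i})$ holds on some neighborhood of $\ast$.

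No single step constitutes a serious obstacle; the real content is the compatibility between the ring operations on germs and the pointwise operations on sequences. The one point that requires care is the finiteness of $A$, without which one could not lift the coefficients of $F$ all at once: the pointwise identity for $i$ near $\ast$ is obtained by intersecting the finitely many neighborhoods on which each $c_{i,\alpha}$ matches the $i$-th term of a chosen representative of $c_\alpha$, and this intersection remains a neighborhood precisely because $A$ is finite.
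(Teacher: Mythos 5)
The paper offers no proof of this proposition---it is explicitly ``left as an exercise''---so there is no argument of record to compare against; your write-up is the intended solution and it is correct. Both directions indeed reduce to the fact that addition, multiplication, and ${}^*\bC$-scalar multiplication of germs are defined pointwise on representatives, so that evaluating a polynomial with ${}^*\bC$-coefficients at germs is the germ of the pointwise evaluations, and you have correctly isolated the only real content: the $F_i$ must be given a \emph{common finite} support before their coefficients can be assembled into finitely many elements of ${}^*\bC$. One small patch is needed for the point you yourself flag. The set $A$ of multi-indices with $\sum_j \alpha_j d_j = d$ is finite only when every $d_j \ge 1$; Definition~\ref{defn:decompnear} permits $g_{j,\bullet}$ of degree $0$, in which case $\alpha_j$ is unconstrained and $A$ is infinite, so ``supported on $A$'' no longer yields a uniform bound on the supports of the $F_i$. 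The fix is the standard one from Example~\ref{ex:quadric}: a degree-$0$ homogeneous sequence is a sequence of scalars and should first be absorbed into the coefficients of the $F_i$ (respectively, a degree-$0$ germ is an element of ${}^*\bC$ and is absorbed into $F$), after which all $d_j \ge 1$, each $\alpha_j \le d/d_j \le d$, and $A$ is finite. With that adjustment the lifting of coefficients in both directions, and the final intersection of finitely many neighborhoods of $\ast$, go through exactly as you describe.
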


This proposition, while entirely formal, is conceptually important because it shows that the decomposability of $f_{\bullet}$ near $\ast$ can detected from the germ $[f_{\bullet}]$ and germ-level constructions (germ addition, multiplication, and scalar multiplication). In other words, one does not have to ``look inside'' of $[f_{\bullet}]$ to determine if $f_{\bullet}$ is decomposable near $\ast$.

\subsection{The main theorem}

The main theorem, and its corollaries, in the setting of homogeneous germs is entirely analogous to that for homogeneous series, and once again realizes an idealized form of the Ananyan--Hochster principle.

\begin{theorem}
Any collection of jointly indecomposable homogeneous germs of positive degree is algebraically independent (relative to the coefficient field ${}^*\bC$).
\end{theorem}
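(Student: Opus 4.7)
The plan is to adapt the inductive differentiation argument of \S\ref{subsec:proof 4} from homogeneous series to homogeneous germs. The key observation is that partial differentiation with respect to any variable $x_k$, defined pointwise on each polynomial $f_i$ in a representing sequence, descends to a well-defined degree-lowering operator on homogeneous germs: if $f_\bullet \sim g_\bullet$ near $\ast$, then $\partial f_i/\partial x_k = \partial g_i/\partial x_k$ on the same neighborhood. This operator is moreover ${}^*\bC$-linear, since $\partial(\alpha_\bullet f_\bullet)/\partial x_k = \alpha_\bullet \cdot \partial f_\bullet/\partial x_k$ pointwise.

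Suppose for contradiction that $[f_{1,\bullet}], \ldots, [f_{r,\bullet}]$ are jointly indecomposable of positive degree and that some non-zero $F \in {}^*\bC[X_1, \ldots, X_r]$ satisfies $F([f_{1,\bullet}], \ldots, [f_{r,\bullet}]) = 0$. Choose $F$ of minimal total degree $D$. Representing $F$ by a sequence $F_\bullet$ of complex-coefficient polynomials, we get an honest polynomial identity $F_i(f_{1,i}, \ldots, f_{r,i}) = 0$ for all $i$ in some neighborhood of $\ast$. Differentiating with respect to $x_k$, applying the chain rule, and passing back to germs yields
\[
\sum_{j=1}^r \left[\tfrac{\partial F_\bullet}{\partial X_j}\right]\!\bigl([f_{1,\bullet}], \ldots, [f_{r,\bullet}]\bigr) \cdot \bigl[\partial f_{j,\bullet}/\partial x_k\bigr] = 0,
\]
a new algebraic relation among the $2r$ germs $[f_{j,\bullet}]$ and $[\partial f_{j,\bullet}/\partial x_k]$, whose governing polynomial has total degree $D-1$.

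As in the series case, the subtlety is that this enlarged collection need not be jointly indecomposable. I would resolve this by enlarging $\{[f_{j,\bullet}]\}_{j=1}^r$ to a maximal jointly indecomposable family $\{[h_{s,\bullet}]\}_{s \in S}$ of positive-degree germs (via Zorn), and then appealing to the germ analog of Corollary~\ref{cor:inv limit} at strictly lower degrees to express each $[\partial f_{j,\bullet}/\partial x_k]$ as a polynomial over ${}^*\bC$ in finitely many of the $[h_{s,\bullet}]$. After substituting, we obtain a polynomial relation of total degree at most $D-1$ among the jointly indecomposable family $\{[h_{s,\bullet}]\}$, which is trivial by the inductive hypothesis. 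Ranging over all choices of the variable $x_k$, one concludes that each $[\partial F/\partial X_j]$ evaluates to zero on the $[f_{j,\bullet}]$; iterating yields a contradiction with the non-triviality of $F$.

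The main obstacle is the simultaneous induction: we need the corollary (every germ is a polynomial in a maximal jointly indecomposable family) at degree $< D$ in order to run the substitution step for the theorem at degree $D$. The cleanest way to organize this is to prove the theorem and its corollary together by induction on $D$, exactly as is implicit in the series proof. Beyond this packaging issue, nothing genuinely new appears: the hypercomplex scalars are absorbed smoothly by the representing-sequence formalism, differentiation respects the ultrafilter equivalence by construction, and all remaining verifications are routine bookkeeping.
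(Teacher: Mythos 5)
Your proposal is correct and follows essentially the same route as the paper, which itself states that the germ case is proved by the same inductive differentiation argument outlined in \S\ref{subsec:proof 4} for homogeneous series. The points you flag---that differentiation descends to germs, that the enlarged collection must be re-expressed in a maximal jointly indecomposable family, and that the theorem and its corollary are proved by a simultaneous induction on total degree---are exactly the ingredients of the paper's argument.
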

The proof of this theorem is nearly identical to the proof outlined in \S\ref{subsec:proof 4}.

\begin{corollary}
Let $\{[g_{i,\bullet}]\}_{i \in \cI}$ be a maximal set of jointly indecomposable homogeneous germs of positive degree, where $\cI$ is an index set. Given any homogeneous germ $[f_{\bullet}]$ there exist distinct indices $i_1, \ldots, i_n \in \cI$ and a polynomial $F$ (with coefficients in ${}^*\bC$) such that $[f_{\bullet}]=F([g_{i_1,\bullet}], \ldots, [g_{n,\bullet}])$. Moreover, this expression is unique up to applying a permutation to $i_1, \ldots, i_n$ and the inverse permutation to $F$.
\end{corollary}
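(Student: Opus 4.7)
The plan is to deduce this corollary from the preceding theorem on algebraic independence of jointly indecomposable germs, following the same pattern as the analogous statement for homogeneous series. The existence part goes by induction on the degree of $[f_\bullet]$, using the maximality of $\{[g_{i,\bullet}]\}_{i\in\cI}$; the uniqueness part is a direct consequence of algebraic independence.

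For existence, I would induct on $d = \deg[f_\bullet]$. When $d=0$, the germ $[f_\bullet]$ lies in ${}^*\bC$, so I take $n=0$ and $F$ to be the constant polynomial $[f_\bullet]$. Suppose $d>0$ and the claim is established in all smaller degrees. If $[f_\bullet]$ equals $[g_{i_1,\bullet}]$ for some $i_1 \in \cI$, then $F(X_1)=X_1$ works; otherwise, by maximality of $\{[g_{i,\bullet}]\}_{i\in\cI}$, the family $\{[f_\bullet]\}\cup\{[g_{i,\bullet}]\}_{i\in\cI}$ is not jointly indecomposable, so some non-trivial homogeneous ${}^*\bC$-linear combination of elements from this family is decomposable. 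Such a combination cannot involve only the $[g_{i,\bullet}]$'s (any finite subfamily of a jointly indecomposable collection is itself jointly indecomposable, directly from the definition), so $[f_\bullet]$ must appear with a nonzero coefficient $\alpha \in {}^*\bC$. Homogeneity forces the participating $[g_{i_j,\bullet}]$'s to have degree $d$, and the decomposition takes the form
\[
\alpha\,[f_\bullet] + \sum_{j=1}^{k}\beta_j\,[g_{i_j,\bullet}] = F_0\bigl([h_{1,\bullet}],\ldots,[h_{s,\bullet}]\bigr),
\]
with each $[h_{t,\bullet}]$ of degree strictly less than $d$. Applying the inductive hypothesis to rewrite each $[h_{t,\bullet}]$ as a polynomial (over ${}^*\bC$) in finitely many of the $[g_{i,\bullet}]$'s and then solving for $[f_\bullet]$ yields the required expression.

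For uniqueness, suppose
\[
[f_\bullet] = F\bigl([g_{i_1,\bullet}],\ldots,[g_{i_n,\bullet}]\bigr) = G\bigl([g_{j_1,\bullet}],\ldots,[g_{j_m,\bullet}]\bigr),
\]
where both presentations are reduced in the sense that every listed variable actually appears in the corresponding polynomial. I would let $\{k_1,\ldots,k_l\}$ be the union of the two index sets and re-view both $F$ and $G$ as polynomials in $l$ variables, padding with zero coefficients as needed. Then the polynomial $F - G \in {}^*\bC[X_1,\ldots,X_l]$ vanishes upon substituting the tuple $\bigl([g_{k_1,\bullet}],\ldots,[g_{k_l,\bullet}]\bigr)$. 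Since any finite subfamily of $\{[g_{i,\bullet}]\}_{i\in\cI}$ is jointly indecomposable, the main theorem forces these germs to be algebraically independent over ${}^*\bC$, so $F=G$ as polynomials. Combined with reducedness, this forces the two index sets to coincide and the two polynomials to match up under the resulting permutation of variables.

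The main obstacle is the inductive step for existence, specifically verifying that maximality really produces a relation in which $[f_\bullet]$ participates nontrivially; once one notes that any finite subfamily of a jointly indecomposable collection is jointly indecomposable, this becomes immediate, and the remaining algebraic manipulation (including absorbing any degree-zero $[h_{t,\bullet}]$'s into the coefficients of $F_0$) is routine.
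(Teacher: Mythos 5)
Your argument is correct, and it is the derivation the paper intends: the text presents this corollary as a consequence (indeed an equivalent reformulation) of the algebraic-independence theorem for jointly indecomposable germs, and your induction on degree using maximality for existence, together with algebraic independence over ${}^*\bC$ for uniqueness, is exactly the standard way to carry that out (the paper itself omits the proof). The two points you flag — that a finite subfamily of a jointly indecomposable collection is jointly indecomposable, so the decomposable combination must involve $[f_{\bullet}]$ with invertible coefficient $\alpha \in {}^*\bC$, and that uniqueness should be read for reduced expressions in which every listed variable actually occurs — are precisely the right ones to address.
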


A {\bf bounded degree germ} is a finite sum of homogeneous germs, of possibly varying degrees. Let $\bS$ be the set of all bounded degree germs. This is a graded ring, and contains the field ${}^*\bC$ as its degree~0 piece.

\begin{corollary}\label{cor:ultra poly}
The ring $\bS$ is a polynomial ring (over ${}^*\bC$). Precisely, let $\{[g_{i,\bullet}]\}_{i \in \cI}$ be a maximal set of jointly indecomposable homogeneous germs of positive degree. Then $\bS$ is isomorphic to the polynomial ring ${}^*\bC[X_i]_{i \in \cI}$ with variables indexed by $\cI$. The isomorphism takes a polynomial $F(X_i)_{i \in \cI}$ to the bounded degree germ $F([g_{i,\bullet}])_{i \in \cI}$ obtained by substituting $[g_{i,\bullet}]$ for $X_i$ for all $i$.
\end{corollary}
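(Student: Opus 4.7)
The plan is to construct the claimed isomorphism explicitly and let everything fall out of the preceding theorem and corollary. Since ${}^*\bC$ sits inside $\bS$ as the subring of degree~$0$ germs and $\bS$ is a commutative ${}^*\bC$-algebra, the universal property of the polynomial ring produces a unique ${}^*\bC$-algebra homomorphism
\[
\varphi: {}^*\bC[X_i]_{i \in \cI} \longrightarrow \bS, \qquad X_i \mapsto [g_{i,\bullet}].
\]
All that remains is to show $\varphi$ is bijective.

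For surjectivity, recall that every element of $\bS$ is by definition a finite sum of homogeneous germs, so it suffices to show each homogeneous germ lies in the image of $\varphi$. The preceding corollary supplies exactly this: an arbitrary homogeneous germ $[f_\bullet]$ can be written as $F([g_{i_1,\bullet}], \ldots, [g_{i_n,\bullet}])$ for some $i_1, \ldots, i_n \in \cI$ and some polynomial $F$ with ${}^*\bC$-coefficients, and this element is manifestly $\varphi(F)$.

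For injectivity, suppose $F \in \ker \varphi$ involves only the variables $X_{i_1}, \ldots, X_{i_n}$. I would grade the polynomial ring by assigning each $X_{i_j}$ the positive weight $\deg[g_{i_j,\bullet}]$, which makes $\varphi$ a map of graded ${}^*\bC$-algebras. Decomposing $F$ into weight-homogeneous pieces reduces to the case where $F$ itself is weight-homogeneous, in which case $\varphi(F) = 0$ becomes a homogeneous ${}^*\bC$-algebraic relation among the jointly indecomposable germs $[g_{i_1,\bullet}], \ldots, [g_{i_n,\bullet}]$ and is ruled out by the preceding theorem on algebraic independence. The main obstacle is packaged entirely into that theorem, whose proof follows the variable-by-variable chain-rule induction sketched in \S\ref{subsec:proof 4}; once it is in hand, the present corollary reduces to formal bookkeeping with the graded ${}^*\bC$-algebra structure on $\bS$.
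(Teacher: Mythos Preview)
Your argument is correct and is exactly the intended derivation: the paper states this result as an immediate corollary of the preceding theorem and corollary without giving a separate proof, and your two steps---surjectivity from the expression of an arbitrary homogeneous germ in the maximal jointly indecomposable set, injectivity from algebraic independence after passing to the weighted grading---are precisely how one unpacks that implication.
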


Much like Corollary~\ref{cor:inv limit}, this corollary meets our goal of finding a precise form of the Ananyan--Hochster principle.

\begin{remark}
Ultraproducts were previously used in commutative algebra to establish uniform bounds for polynomials in a fixed number of variables but with varying coefficient field~\cite{van-den-Dries-schmidt}. Our results, which also allow for varying field, are novel in that they allow the number of variables to grow. Ultrafilters have also been used to connect results in characteristic~$p$ and in characteristic~$0$, see~\cite{schoutens}.
\end{remark}

\section{Homogeneous germs and regular sequences} \label{s:reggerm}

We now explain how to use the idealized Ananyan--Hochster principle for homogeneous germs to deduce Theorem~\ref{thm:reg}, which is an instance of the Ananyan--Hochster principle for polynomials. The same method can be used to deduce other instances.  For clarity, we rephrase Theorem~\ref{thm:reg} as follows:

\begin{theorem}\label{thm:high strength reg seq}
Given $d,r \in \bN$, there exists $N \in \bN$ with the following property: if $f_1, \ldots, f_r$ are homogeneous polynomials of degrees $\le d$ with $\nu(f_1,\dots,f_r) > N$ then $f_1,\dots,f_r$ forms a regular sequence.
\end{theorem}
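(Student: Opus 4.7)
The plan is to argue by contradiction, leveraging the idealized Ananyan--Hochster principle for $\bS$. Suppose no such $N$ exists; then for each $N \in \bN$ there are homogeneous polynomials $f_{1,N}, \ldots, f_{r,N}$ of degrees at most $d$ with $\nu(f_{1,N}, \ldots, f_{r,N}) > N$ that fail to form a regular sequence. Fix a non-principal ultrafilter $\ast$ on $\bN$. Since there are only finitely many possible degree tuples in $\{1, \ldots, d\}^r$, the ultrafilter axioms let me assume each sequence $f_{i,\bullet}$ has a constant degree $d_i$ for $N$ in a neighborhood of $\ast$, so each $f_{i,\bullet}$ defines a homogeneous germ $[f_{i,\bullet}] \in \bS$.

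Next I would verify that these germs are jointly indecomposable. Indeed, if some nontrivial ${}^*\bC$-linear combination $\sum_i [\alpha_{i,\bullet}][f_{i,\bullet}]$ were $n$-decomposable for some finite $n$, then unwinding the germ definitions shows that $\sum_i \alpha_{i,N} f_{i,N}$ is $n$-decomposable in $\bC[x_1, x_2, \dots]$ for $N$ in a neighborhood of $\ast$ with $(\alpha_{1,N}, \ldots, \alpha_{r,N}) \neq 0$; this forces $\nu(f_{1,N}, \ldots, f_{r,N}) \leq n$ there, contradicting $\nu > N$ as soon as $N > n$. By Corollary~\ref{cor:ultra poly}, I may extend $\{[f_{i,\bullet}]\}$ to a maximal jointly indecomposable set realizing the isomorphism $\bS \cong {}^*\bC[X_i]_{i \in \cI}$; under this isomorphism the germs $[f_{i,\bullet}]$ become $r$ distinct variables, so they form a regular sequence in $\bS$.

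The hardest step will be to descend this regular sequence property back to the polynomial rings $R_N := \bC[x_1, x_2, \dots]$ for $N$ in some neighborhood of $\ast$, which will contradict the failure of regularity in our counterexamples. The natural tool is \L{}o\'s's theorem, since the regular sequence condition---``for each $i$ and all $g, h_1, \ldots, h_{i-1}$, if $g f_i = \sum_{j<i} h_j f_j$ then there exist $h'_j$ with $g = \sum_{j<i} h'_j f_j$''---is, for each $i$, a first-order statement that transfers between the full ultraproduct $T = \prod_\ast R_N$ and $R_N$ for $N$ in a neighborhood of $\ast$. The subtlety that makes this the main obstacle is that $\bS$ is only the bounded-degree subring of $T$, whereas the first-order witnesses $g$ and $h_j$ in $T$ may correspond to sequences of unbounded degree. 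Bridging this gap---for instance by promoting the polynomial ring structure of Corollary~\ref{cor:ultra poly} from $\bS$ to $T$ via a flatness argument, or by directly bounding the degree of any witness of non-regularity so that it assembles into a bounded-degree germ and thereby contradicts regularity in $\bS$---completes the descent and delivers the contradiction.
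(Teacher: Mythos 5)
Your proposal follows essentially the same route as the paper's own sketch: pass to a sequence of counterexamples, form the homogeneous germs, check joint indecomposability, invoke Corollary~\ref{cor:ultra poly} to see the germs are distinct variables (hence a regular sequence) in $\bS$, and then descend regularity back to a neighborhood of $\ast$. The descent step you flag as the main obstacle is exactly the step the paper also defers (to \cite[Corollary~4.10]{stillman}, proved there by commutative algebra using Corollary~\ref{cor:ultra poly} rather than by \L{}o\'s's theorem on the full ultraproduct), and you correctly identify the bounded-degree subtlety that makes it nontrivial.
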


\begin{proof}[Sketch of proof]
Consider a sequence $(f_{1,i}, \ldots, f_{r,i})_{i \ge 1}$ of tuples of homogeneous polynomials of degrees $\le d$ with $\nu(f_{1,i}, \ldots, f_{r,i})$ tending to infinity with $i$. If $f_{1,i},\ldots,f_{r,i}$ forms a regular sequence for all $i\gg 0$ then we are done.  If this is not the case, then, by passing to a subsequence, we can assume that $f_{1,i},\ldots,f_{r,i}$ {\em fails} to form a regular sequence for all $i$.  We will show that this latter possibility cannot occur.

Consider the homogeneous germs $[f_{1,\bullet}], \ldots, [f_{r,\bullet}]$. These are jointly indecomposable: indeed, an $n$-decomposition of them would yield an $n$-decomposition of $(f_{1,i},\ldots,f_{r,i})$ for all $i$ in a neighborhood of $\ast$, thus bounding $\nu(f_{1,i},\ldots,f_{r,i})$ in this neighborhood, a contradiction. We may thus assume that $[f_{1,\bullet}], \dots, [f_{r,\bullet}]$ are part of a maximal set of jointly indecomposable homogeneous germs of positive degree. By Corollary~\ref{cor:ultra poly}, there is an isomorphism of $\bS$ with a polynomial ring ${}^*\bC[X_i]_{i \in \cI}$ where $[f_{1,\bullet}], \dots, [f_{r,\bullet}]$ are mapped to distinct variables.  Since distinct variables in a polynomial ring form a regular sequence, and since this property is invariant under ring isomorphisms, it follows that $[f_{1,\bullet}], \dots, [f_{r,\bullet}]$ form a regular sequence in $\bS$.

To complete the proof, it now suffices to prove the following general statement: if homogeneous germs $[g_{1,\bullet}], \dots, [g_{r,\bullet}]$ form a regular sequence, then the polynomials $g_{1,i},\dots,g_{r,i}$ form a regular sequence in some neighborhood of $\ast$. We do this in \cite[Corollary~4.10]{stillman}. The proof crucially uses Corollary~\ref{cor:ultra poly}, but is otherwise straightforward commutative algebra. However, the details would take us too far afield.
\end{proof}

\begin{remark}
The proof of Theorem~\ref{thm:high strength reg seq} given above is based on~\cite[\S4]{stillman}, and it has a very different flavor from the proof given in~\cite{ananyan-hochster}.  The proof in~\cite{ananyan-hochster} has a ``bottom up'' structure, using a six-fold induction to prove increasingly nice properties about the polynomials $f_1,\dots,f_r$ as the $\nu$-complexity grows. In particular, it relies on a number of different instances of the Ananyan--Hochster principle.  By contrast, the proof outlined above has a ``top down'' structure, where the key insight lies in understanding homogeneous germs with infinite $\nu$-complexity, and then the specific property of being a regular sequence is descended to the case of large $\nu$-complexity.
\end{remark}

\section{From Hilbert's Syzygy Theorem to Stillman's Conjecture} \label{s:stillman}
\subsection{Algebra review}
To state Hilbert's Syzygy Theorem and Stillman's Conjecture, we need to review some algebraic notions.  Let $S=\bC[x_1,\dots,x_n]$ be the polynomial ring in $n$ variables.  Given polynomials $f_1,\dots,f_r$, the {\bf ideal} $(f_1,\dots,f_r)$ is the subset of $S$ consisting of all combinations $\sum_{i=1}^r a_if_i$, where the coefficients $a_i$ are allowed to be arbitrary polynomials in $S$.  Generally, the interesting properties of a collection of polynomials $f_1,\dots,f_r$ depend only on the ideal $(f_1,\dots,f_r)$, and not on the specific choice of generators.

An $S$-module is a ring-theoretic analogue of a vector space.  In particular, an $S$-module is an abelian group $M$ together with a scalar multiplication $S\times M\to M$ that satisfies certain basic axioms like distributivity.  The simplest modules are the {\bf free modules} $S^m$ and those behave very analogously to vector spaces.  The free module $S^m$ is the set of sequences $(a_1,a_2,\dots,a_m)$ with $a_i\in S$; addition is termwise and scalar multiplication is $f\cdot (a_1,\dots,a_m)=(fa_1,\dots,fa_m)$ for any $f\in S$.

Hilbert's Syzygy Theorem compares arbitrary $S$-modules (which can be quite complicated) with free $S$-modules.  The key definition is that of a {\bf free resolution}: this is a diagram
\begin{equation}\label{eqn:free res}
S^{b_0}\xleftarrow{\ \phi_1 \ }S^{b_1}\xleftarrow{\ \phi_2 \ } S^{b_2} \xleftarrow{\ \phi_3 \ } \cdots
\end{equation}
where each $\phi_i$ is an $S$-module homomorphism, and where the kernel of $\phi_1$ equals the image of $\phi_2$, the kernel of $\phi_2$ equals the image of $\phi_3$, and so on. If $M$ equals the cokernel of $\phi_1$, then this is said to be a {\bf free resolution of $M$}. In this case, the first two terms of the resolution provide a presentation for $M$, with $b_0$ generators and $b_1$ relations. The numbers $b_2,b_3,\dots$ are more subtle: $b_2$ is something like the number of secondary relations (the relations among the relations) and so on. In astronomy, the word {\bf syzygy} refers to a conjunction, often of astrological bodies; in algebra, it is used to refer to these secondary relations, tertiary relations, and so on.\footnote{While the numbers $b_i$ are not unique as we have defined them, they can be made unique for graded modules through the notion of a {\bf minimal free resolution}.}

\begin{example}
Let $S=\bC[x_1,x_2]$ and let $M$ be the $S$-module $S/(x_1^2, x_1x_2)$. One has the following free resolution of $M$:
\[
S^{1}\xleftarrow{\ \phi_1 \ }S^{2}\xleftarrow{\ \phi_2 \ } S^{1} \xleftarrow{\ \phi_3 \ } 0 \xleftarrow{\ \phi_4 \ } 0 \xleftarrow{\ \phi_5 \ } \cdots
\]
where the morphisms are represented by  matrices as follows
\[
\phi_1 = \begin{bmatrix} x_1^2 & x_1x_2\end{bmatrix} \text{ and } \phi_ 2 = \begin{bmatrix} -x_2\\ x_1 \end{bmatrix}.
\]
An elementary computation confirms that the kernel of $\phi_1$ equals the image of $\phi_2$, and that the kernel of $\phi_2$ is zero.
\end{example}

Of the many invariants one can extract from free resolutions, one of the most important is projective dimension: the {\bf projective dimension} of an $S$-module $M$ is the minimal $p$ such that $M$ has a free resolution that terminates (i.e., is zero) after $p$ steps:
\[
S^{b_0}\xleftarrow{\ \phi_1 \ }S^{b_1}\xleftarrow{\ \phi_2 \ } S^{b_2} \xleftarrow{\ \phi_3 \ } \cdots \xleftarrow{\ \phi_p \ } S^{b_p} \gets 0 \gets 0 \gets \cdots
\]
Hilbert's Syzygy Theorem states that, for the polynomial ring $S$, every module has finite projective dimension.  Even better, the projective dimension is bounded above by the number of variables.

\begin{theorem}[Hilbert's Syzygy Theorem]
Let $S=\bC[x_1,\dots,x_n]$.  The projective dimension of any finitely generated $S$-module is at most $n$.  In particular, if $f_1,\dots,f_r\in S$, then the projective dimension of $S/(f_1,\dots,f_r)$ is at most $n$.
\end{theorem}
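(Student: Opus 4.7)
The plan is to prove Hilbert's Syzygy Theorem via the \emph{Koszul complex}. Specifically, the Koszul complex $K_\bullet$ on the sequence $x_1,\dots,x_n$ provides an explicit free resolution of the residue field $\bC = S/(x_1,\dots,x_n)$ of length exactly $n$: its $i$-th term is the free module $S^{\binom{n}{i}}$ with the standard alternating-sign differential. Exactness in positive degrees follows by a classical induction on $n$, using that each $x_j$ remains a nonzerodivisor modulo $(x_1,\dots,x_{j-1})$; equivalently, one writes $K_\bullet$ as the total complex of the tensor product of the two-term Koszul complexes on the individual $x_j$'s.

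Next, I would reduce to a situation where $M$ admits a \emph{minimal} free resolution — either by assuming $M$ is graded, or by localizing at the maximal ideal $\mathfrak{m}=(x_1,\dots,x_n)$ and invoking Nakayama's lemma. Minimality of a free resolution $F_\bullet$ means that every differential $\phi_i$ has entries in $\mathfrak{m}$, and this is equivalent to the statement that all differentials of $F_\bullet \otimes_S \bC$ vanish. It follows that the Betti numbers $b_i$ are cohomological invariants of $M$:
\[
\dim_{\bC} \operatorname{Tor}_i^S(M,\bC) = b_i \qquad \text{for every } i \ge 0.
\]

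Finally, I would compute $\operatorname{Tor}^S(M,\bC)$ using the Koszul resolution instead of $F_\bullet$: one has $\operatorname{Tor}_i^S(M,\bC) = H_i(K_\bullet \otimes_S M)$. Since $K_\bullet$ is zero in homological degrees greater than $n$, the Tor groups vanish for $i > n$, and hence $b_i = 0$ for $i > n$. This forces the minimal resolution to terminate after at most $n$ steps, giving $\operatorname{pd}_S(M) \le n$. The statement about $S/(f_1,\dots,f_r)$ is then simply the special case $M = S/(f_1,\dots,f_r)$.

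I expect the main bookkeeping obstacle to be the minimality reduction: converting the abstract vanishing of $\operatorname{Tor}_i$ into concrete termination of a free resolution relies on having a \emph{minimal} resolution in hand, and while this is immediate in the graded setting emphasized by the rest of the paper, in full generality it requires localizing at a maximal ideal and citing Nakayama. Once one is willing to work in the graded (or local) category, the rest — exactness of the Koszul complex on a regular sequence, and the change-of-resolution computation of $\operatorname{Tor}$ — is entirely standard homological algebra.
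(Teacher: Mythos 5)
The paper does not prove Hilbert's Syzygy Theorem at all: it is stated as classical background (with a footnote about minimal free resolutions) and immediately put to use, so there is no in-paper proof to compare against. Your Koszul-complex argument is the standard modern proof and is correct: the Koszul complex $K_\bullet$ on $x_1,\dots,x_n$ resolves $\bC$ in length $n$ because the variables form a regular sequence, minimality of a (graded or local) free resolution $F_\bullet$ identifies $b_i$ with $\dim_{\bC}\operatorname{Tor}_i^S(M,\bC)$, and computing the same Tor from the other side via $K_\bullet\otimes_S M$ kills everything above homological degree $n$. You correctly flag the only point needing care, namely that minimal resolutions exist only in the graded or local category; for the paper's purposes the graded case is all that is ever used, and for a general finitely generated module one would finish by observing that $\operatorname{pd}_S M=\sup_{\mathfrak{m}}\operatorname{pd}_{S_{\mathfrak{m}}}M_{\mathfrak{m}}$ and running the same Koszul argument at each maximal ideal $\mathfrak{m}=(x_1-a_1,\dots,x_n-a_n)$, whose generators are again a regular sequence (here one also uses that a finitely generated module whose $n$-th syzygy is locally free is projective). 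No gaps beyond that standard bookkeeping.
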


Free resolutions are computable objects (in a very strong sense: these objects can be computed by the computer algebra system {\em Macaulay2}~\cite{M2}) from which one can obtain many useful invariants of the sequence $f_1,\dots,f_r$;  see Remark~\ref{rmk:applications of syzygies} below.  The projective dimension provides one measure of the size of the minimal free resolution of $f_1,\dots,f_r$, and it is related to the computational complexity of answering certain questions about the sequence $f_1,\dots,f_r$.

\subsection{Stillman's Conjecture}
If $n$ is very large, then the bound on projective dimension given by Hilbert's Syzygy Theorem might be very far from optimal.  For instance, if we had $4$ cubic polynomials in $10^{100}$ variables, Hilbert's bound would say that the projective dimension is at most $10^{100}$. It is natural to ask if we can do better.  Stillman first proposed this type of question, asking whether there is an a priori upper bound on the projective dimension of an ideal that depends on the number of polynomials and their degrees, but which is insensitive to the number of variables:

\begin{conjecture}[Stillman's Conjecture]\label{conj:Stillman}
Let $d_1,\dots,d_r$ be positive integers. There exists a positive integer $B(d_1,\dots,d_r)$ satisfying the following condition: if $f_1,\dots,f_r$ are any homogeneous polynomials with $\deg(f_i)=d_i$ in a polynomial ring $S=\bC[x_1,x_2,\dots,x_n]$ (for any $n$), then the projective dimension of $S/(f_1,\dots,f_r)$ is at most $B(d_1,\dots,d_r)$.
\end{conjecture}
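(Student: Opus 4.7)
The plan is to combine the iterative Ananyan--Hochster decomposition with Theorem~\ref{thm:high strength reg seq} to embed the ideal $(f_1,\dots,f_r)$ inside a polynomial subring of bounded size, then to invoke Hilbert's Syzygy Theorem on that subring. The bound $B(d_1,\dots,d_r)$ will emerge as a function $s(r,\max_i d_i)$ measuring how many auxiliary polynomials are needed to express all the $f_i$ in a regular sequence.

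First I would set up a collapse procedure. Start with the multiset $\Gamma = \{f_1,\dots,f_r\}$ and fix a threshold $N$ (to be chosen later). If $\nu(\Gamma) > N$, stop; otherwise, some non-trivial homogeneous linear combination $\sum \alpha_j g_j$ of elements of $\Gamma$ with $\alpha_{j_0}\neq 0$ decomposes as $G(h_1,\dots,h_N)$ for homogeneous $h_k$ of strictly smaller degree. Solve this relation to rewrite $g_{j_0}$ as a polynomial in $h_1,\dots,h_N$ and the remaining $g_j$, and replace $g_{j_0}$ in $\Gamma$ by $h_1,\dots,h_N$. Every original $f_i$ still lies in the subring generated by the new $\Gamma$, and the multiset of degrees of $\Gamma$ strictly decreases in the lexicographic order, so well-foundedness forces termination at some $\Gamma=\{g_1,\dots,g_s\}$ with $\nu(g_1,\dots,g_s) > N$. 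Routine bookkeeping bounds $s$ by a function $\phi(r,d,N)$ that is independent of the ambient polynomial ring $S$.

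Next I would apply Theorem~\ref{thm:high strength reg seq}: once $N$ exceeds the threshold $N_0(d,s)$ which guarantees that $s$ polynomials of degree $\leq d$ with $\nu > N$ automatically form a regular sequence, the output $g_1,\dots,g_s$ is a regular sequence in $S$. The subtlety here is that this is a coupled condition: $s$ depends on $N$ while the requirement $N > N_0(d,s)$ depends on $s$. Resolving this coupling---say by a joint induction on $(r,d)$ in lexicographic order---is the main technical obstacle. Granting it, one extracts a bound $s = s(r,d)$ with no dependence on the number of variables $n$ in $S$. Since a regular sequence is algebraically independent, $A := \bC[g_1,\dots,g_s] \subseteq S$ is then a polynomial subring in $s$ variables.

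The remainder is standard homological algebra. The Koszul complex on $g_1,\dots,g_s$ over $A$ resolves $A/(g_1,\dots,g_s) = \bC$; tensoring with $S$ over $A$ produces the Koszul complex on the same elements viewed in $S$, which is exact precisely because $g_1,\dots,g_s$ forms a regular sequence in $S$. Hence $\mathrm{Tor}^A_i(S,\bC) = 0$ for all $i > 0$, and the graded minimal free resolution argument then forces $S$ to be a free $A$-module. Writing $f_i = F_i(g_1,\dots,g_s)$ with $F_i \in A$ gives $S/(f_1,\dots,f_r) = \bigl(A/(F_1,\dots,F_r)\bigr) \otimes_A S$, and flatness of $S$ over $A$ carries any free $A$-resolution of the left factor to a free $S$-resolution of the quotient. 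Applying Hilbert's Syzygy Theorem to the $s$-variable ring $A$ then yields
\[
\mathrm{pd}_S\bigl(S/(f_1,\dots,f_r)\bigr) \;\leq\; \mathrm{pd}_A\bigl(A/(F_1,\dots,F_r)\bigr) \;\leq\; s \;=\; s(r,\max_i d_i) =: B(d_1,\dots,d_r),
\]
which depends only on the multiset of degrees, completing the argument.
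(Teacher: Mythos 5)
Your overall strategy is exactly the one the paper uses: an iterative collapse procedure producing $g_1,\dots,g_s$ of high $\nu$-complexity expressing all the $f_i$, Theorem~\ref{thm:high strength reg seq} to conclude $g_1,\dots,g_s$ is a regular sequence, freeness of $S$ over $A=\bC[g_1,\dots,g_s]$, and flat base change from Hilbert's Syzygy Theorem in $s$ variables. The homological endgame is correct and, if anything, more detailed than the paper's (which simply cites freeness of $S$ over $A$ from Bruns--Herzog).

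There is, however, one genuine gap, and it is precisely the one you flag and then wave away with ``Granting it'': the coupling between the output length $s$ and the threshold $N$. With a \emph{fixed} threshold $N$ chosen in advance, your procedure terminates with some $s\le\phi(r,d,N)$, and you then need $N>N_0(d,s)$; since $\phi$ grows with $N$ and $N_0$ grows with $s$, the required inequality $N>N_0(d,\phi(r,d,N))$ is a fixed-point condition with no reason to be satisfiable, and a ``joint induction on $(r,d)$'' does not obviously produce such an $N$ --- the tension is between $s$ and $N$, not between $r$ and $d$. The paper's resolution is a small but essential modification of your algorithm: take the threshold to be a \emph{function} $N(-)$ of the current tuple length, and at each iteration with current tuple $(g_1,\dots,g_t)$ test whether $\nu(g_1,\dots,g_t)>N(t)$, halting only when this adaptive test passes. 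Choosing $N(t)$ to be the regular-sequence threshold of Theorem~\ref{thm:high strength reg seq} for $t$ forms of degree $\le d$, the halting condition automatically certifies that the output is a regular sequence, and the length of the output is bounded in terms of $d$, $r$, and the function $N$ by induction on the (lexicographically well-ordered) degree tuple. With that one change your argument closes up and coincides with the proof in \S\ref{s:proof of stillman}.
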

Stillman, who is one of the authors of the computational program {\em Macaulay2}~\cite{M2}, was interested in this question due to its potential connection with Gr\"obner basis algorithms. These algorithms are essential to symbolic computation in algebra and algebraic geometry, but they are infamous for their complexity: in the worst case, the run time grows doubly exponentially in the number of variables.  However, a bound on projective dimension---such as the one in Conjecture~\ref{conj:Stillman}---might allow for alternate computational techniques in special circumstances.

\begin{remark}\label{rmk:applications of syzygies}
The study of free resolutions has applications to a huge array of topics related to algebra.  For instance, free resolutions were used by Stanley in his proof of the Upper Bound Conjecture in combinatorics~\cite{stanley}.  Also, starting with highly influential conjectures of Mark Green~\cite{green-koszul1,green-koszul2}, which were later largely proven by Voisin~\cite{voisin-3,voisin-2,voisin-1}, free resolutions have been used to understand subtle geometric properties of algebraic curves.  One recent result in this vein is Ein and Lazarsfeld's 2015 proof of the Gonality Conjecture from~\cite{green-lazarsfeld}.  The {\bf gonality} of a smooth projective curve $C$ is the minimal degree of a map $C\to \bP^1$, and the Gonality Conjecture relates the gonality of a curve to its free resolution.
The interested reader should see~\cite{EL}.
\end{remark}

\subsection{Special cases of Stillman's Conjecture}
When $r=1$ the projective dimension of $S/(f_1)$ is at most $1$ and when $r=2$ the projective dimension of $S/(f_1,f_2)$ is at most $2$.  When $r=3$ things become much more complicated, as illustrated by the following theorem.

\begin{theorem}\label{thm:bruns}
Fix any $n\geq 1$ and let $S=\bC[x_1,\dots,x_n]$.  There exist polynomials $f_1,f_2,f_3$ such that the projective dimension of $S/(f_1,f_2,f_3)$ is $n$ (i.e., the maximum value allowed by Hilbert's Syzygy Theorem).
\end{theorem}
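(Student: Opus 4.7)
The plan is to first produce an $S$-module whose projective dimension attains the maximum $n$ permitted by Hilbert's Syzygy Theorem, and then invoke a theorem of Bruns to realize the same projective dimension using a cyclic module of the form $S/(f_1,f_2,f_3)$.

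For the first step, the natural candidate is the residue field $\bC = S/(x_1,\dots,x_n)$. The Koszul complex on the regular sequence $x_1,\dots,x_n$ provides an explicit free resolution
\[
0 \to S^{\binom{n}{n}} \to S^{\binom{n}{n-1}} \to \cdots \to S^{\binom{n}{1}} \to S \to \bC \to 0
\]
of length exactly $n$. Minimality of this resolution follows because every matrix entry lies in $(x_1,\dots,x_n)$, so tensoring with $\bC$ kills every differential; consequently the projective dimension of $\bC$ is exactly $n$. However, the ideal $(x_1,\dots,x_n)$ requires $n$ generators, so this observation by itself does not prove the theorem.

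The second step is to pass from $\bC$ to a quotient by a 3-generated ideal while preserving the projective dimension. Here I would appeal to Bruns' theorem: for any Noetherian ring $R$ and any finitely generated $R$-module $M$ with projective dimension at least $2$, there exist three elements $f_1,f_2,f_3 \in R$ such that $R/(f_1,f_2,f_3)$ has the same projective dimension as $M$. Applied to $R = S$ and $M = \bC$ (which requires $n \geq 2$), this immediately produces the desired $f_1,f_2,f_3$; the cases $n = 0,1$ are trivial, handled by the tuples $(0,0,0)$ and $(x_1,0,0)$ respectively.

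The main obstacle is Bruns' construction itself, which is the crux of the theorem: it is not at all obvious why three elements should suffice when the projective dimension is very large. The rough idea is to take successive syzygies of $M$ to reach a module $N$ of projective dimension exactly $2$, and then to carefully engineer a 3-generated ideal whose minimal free resolution, after truncation, recovers $M$ up to free summands. The fact that three is the correct number---not two and not four---reflects a genuine rigidity of free resolutions: cyclic modules cut out by one or two elements (or by height-two perfect ideals) have projective dimension at most $2$ by Hilbert--Burch, whereas with three generators the projective dimension is already unbounded, as the theorem itself asserts.
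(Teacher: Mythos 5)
Your proposal is correct and follows essentially the same route as the paper, which does not prove this statement but simply attributes it to Burch, Kohn, and Bruns: the whole content is the reduction to Bruns' theorem (``every'' finite free resolution is the resolution of a three-generated ideal), applied to the Koszul resolution of the residue field. Your Koszul computation and the handling of small $n$ are fine, and you correctly flag that the only nontrivial input is Bruns' construction itself, which neither you nor the paper reproduces.
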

Variants of this theorem were proven by Burch~\cite{burch}, Kohn~\cite{kohn}, and Bruns~\cite{bruns}.  Even if one bounds the degrees of the polynomials, the projective dimension can be surprisingly large for three polynomials.

\begin{theorem}[\cite{bmnsss}]
Fix $d$ and any $n\gg d$.  There exist degree $d$ polynomials $f_1,f_2,f_3\in S=\bC[x_1,\dots,x_n]$ such that the projective dimension of $S/(f_1,f_2,f_3)$ is at least $\sqrt{d}^{\sqrt{d}-1}$.
\end{theorem}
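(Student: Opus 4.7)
The plan is to construct $f_1, f_2, f_3$ explicitly by a two-step procedure: first produce an auxiliary ideal $J$ whose projective dimension is already enormous while its generators are controlled, then encode $J$ as an iterated colon of a 3-generated ideal, arranging for the three generators to have degree exactly $d$ while preserving the large projective dimension.

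For the first step, I would choose $J$ from a family of ideals whose projective dimension can be computed or bounded directly. A prototypical source is the $k$-th power of the maximal ideal, $J_{m,k} = (y_1, \ldots, y_m)^k$, which is artinian in $m$ variables and hence has projective dimension exactly $m$ by the Auslander--Buchsbaum formula. More intricate variants --- sums, intersections, or iterated powers on disjoint groups of variables --- can drive the projective dimension much higher as a function of the generator degree. The target is an ideal $J$ with projective dimension at least $\sqrt{d}^{\sqrt{d}-1}$ whose generators have degrees small enough to fit into the degree budget $d$ after the reduction below.

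For the second step, I would apply a quantitative version of Bruns' theorem \cite{bruns}: given an ideal $J = (g_1, \ldots, g_s)$ with $\deg(g_i) \le k$, adjoin auxiliary variables $t_1, \ldots, t_N$ and construct three polynomials $f_1, f_2, f_3$ in $S = \bC[y_1, \ldots, y_m, t_1, \ldots, t_N]$ of degree $d$ such that $J$ is recovered from $(f_1, f_2, f_3)$ by an iterated colon operation of the form $((f_1, f_2, f_3) : t_1^{a_1} \cdots t_N^{a_N})$. The key structural point is that, under an appropriate depth condition, such colon operations cannot decrease projective dimension, so $\mathrm{pd}\, S/(f_1, f_2, f_3) \ge \mathrm{pd}\, S/J$. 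By specifying the form of $f_1, f_2, f_3$ so that the $g_i$ appear as the coefficients of distinct monomials in the $t_j$, one can arrange for the degree of each $f_i$ to equal exactly $d$.

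I expect the main obstacle to be the simultaneous balance of several constraints: the degree of each $f_i$ must be exactly $d$; the iterated colon must recover $J$ (or at least an ideal with the same projective dimension); and the starting ideal $J$ must itself have projective dimension at least $\sqrt{d}^{\sqrt{d}-1}$ despite its generators being of bounded degree. Achieving all three simultaneously is the delicate core of the construction in \cite{bmnsss}. The preservation of projective dimension under the Bruns-type reduction is verified by direct $\mathrm{Tor}$ or spectral-sequence computations, and the exotic exponent $\sqrt{d}-1$ emerges naturally from optimizing the number of variables in $J$ and the degrees of its generators against the degree budget $d$ imposed by the reduction.
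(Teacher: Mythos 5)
There is a genuine gap at the heart of your second step, and it is worth seeing why it cannot be repaired. You propose to start from an auxiliary ideal $J$ with enormous projective dimension (built, say, from powers of maximal ideals in many variables) and then invoke a ``quantitative version of Bruns' theorem'' producing three generators of degree exactly $d$ whose ideal has projective dimension at least that of $J$. No such quantitative statement is available, and none can exist in the strength you need: if an arbitrary $J$ could be converted into a three-generated ideal in degree $\le d$ without losing projective dimension, then taking $J=(y_1,\dots,y_m)^k$ with $m\to\infty$ and $k$ fixed would yield three forms of bounded degree with unbounded projective dimension, contradicting Stillman's Conjecture (Conjecture~\ref{conj:Stillman}), which this paper proves. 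Bruns' theorem \cite{bruns} realizes the \emph{tail} of a given finite free resolution as the resolution of a three-generated ideal, but it gives essentially no control on the degrees of those three generators; that uncontrolled degree growth is exactly why the achievable lower bound degrades from ``number of variables'' down to $\sqrt{d}^{\sqrt{d}-1}$. The supporting claim that colon operations ``cannot decrease projective dimension'' under a depth hypothesis is likewise not a theorem one can lean on in this generality.

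The actual argument in \cite{bmnsss} is a direct construction rather than a reduction. For parameters $m,n$ one writes down three explicit forms of degree $d\approx mn$ in a polynomial ring with $N$ variables, where $N$ is on the order of $m^{n-1}$: two of the generators are pure powers, and the third is a sum of terms in which roughly $m^{n-1}$ \emph{new} variables each appear, paired with suitable monomials in the first two variables. The key computation is a colon-ideal calculation showing that $(f_1,f_2,f_3):h$ is primary to the homogeneous maximal ideal for an explicit $h\notin(f_1,f_2,f_3)$; hence the maximal ideal is an associated prime of $S/(f_1,f_2,f_3)$ and $\operatorname{depth} S/(f_1,f_2,f_3)=0$. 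The Auslander--Buchsbaum formula then gives $\operatorname{pd} S/(f_1,f_2,f_3)=N$, and optimizing $m=n=\sqrt{d}$ yields the bound $\sqrt{d}^{\sqrt{d}-1}$. So colon ideals do appear, but their role is to force depth zero so that the projective dimension equals the (deliberately inflated) number of ambient variables---not to transfer projective dimension from an auxiliary ideal $J$. If you want to salvage your outline, replace the Bruns-type reduction by this depth-zero mechanism and put your effort into the combinatorics of the third generator.
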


Theorem~\ref{thm:bruns} shows that no bound on projective dimension exists solely in terms of the number of polynomials; the theorem of \cite{bmnsss} shows that any positive answer to Stillman's Conjecture would grow quite quickly in $d$.

\begin{remark}
In the case where $f_1,\dots,f_r\in S=\bC[x_1,\dots,x_n]$ define a smooth subvariety of $\bP^{n-1}$, a theorem of Faltings shows that the projective dimension of $S/(f_1,\dots,f_r)$ is at most $3r$~\cite{faltings}.
\end{remark}

\section{The Ananyan--Hochster principle implies Stillman's Conjecture}\label{s:proof of stillman}

We now explain how to use one instance of the Ananyan--Hochster principle, namely Theorem~\ref{thm:reg} (which is the same as Theorem~\ref{thm:high strength reg seq}) to prove Stillman's Conjecture. We begin with the following elementary observation, which shows that we can write a given collection of polynomials in terms of polynomials with high $\nu$-complexity, with great flexibility:

\begin{proposition}
Let $d,r \in \bN$ be given, together with a function $N \colon \bN \to \bN$. Then there exist $s \in \bN$ with the following property: given any homogeneous polynomials $f_1, \ldots, f_r$ of degrees $\le d$ there exist homogeneous polynomials $g_1, \ldots, g_s$ of degrees $\le d$ with $\nu(g_1,\ldots,g_s) > N(s)$ such that each $f_i$ can be written as $F_i(g_1,\ldots,g_s)$ for some polynomial $F_i$.
\end{proposition}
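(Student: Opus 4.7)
The plan is to prove the statement by iteratively modifying a working set $\{g_1,\ldots,g_s\}$, initially equal to $\{f_1,\ldots,f_r\}$, until $\nu(g_1,\ldots,g_s) > N(s)$. So long as this stopping condition fails, the definition of $\nu$-complexity yields a non-trivial homogeneous combination $h = \sum_{i \in J} \alpha_i g_i$ (with the $g_i$, $i \in J$, of some common degree $d'$) together with a decomposition $h = F(h_1,\ldots,h_{N(s)})$ in which each $h_k$ has degree strictly less than $d'$. I then pick any $j \in J$ with $\alpha_j \ne 0$, solve the relation for $g_j$ as a polynomial in $\{g_i : i \ne j\} \cup \{h_1,\ldots,h_{N(s)}\}$, and replace the working set accordingly. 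The invariant I maintain throughout is that each $f_i$ remains a polynomial in the current working set: substituting the new formula for $g_j$ into the existing expression for $f_i$ produces an expression in the new generators.

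For termination, I would use the degree profile $\mu = (a_d, a_{d-1},\ldots,a_1) \in \bN^d$, where $a_k$ counts working generators of degree $k$. Each replacement step decreases $a_{d'}$ by $1$ and may only increase $a_k$ for $k < d'$, so $\mu$ strictly decreases in the lexicographic order. Since lexicographic order on $\bN^d$ is a well-order, the process halts in finitely many steps; at that moment, the stopping condition together with the invariant give the conclusion of the proposition for the current $s$.

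To obtain the \emph{uniform} bound on $s$ demanded by the statement, I would prove by induction on $d$ that the terminal $s$ is bounded by some explicit function $\sigma(d,r,N)$. The base $d = 1$ is immediate: a decomposition of a degree-one combination forces either a linear dependence among linear forms or a combination that is constant, so after pruning one arrives at a basis of the linear span of the $f_i$, of size at most $r$, which has $\nu = \infty$. For the inductive step, I would observe that $a_d$ starts at at most $r$ and never grows, so there are at most $r$ replacement steps involving a degree-$d$ combination; each such step inflates $s$ by at most $N(s)-1$, and between any two top-degree steps the process is an instance of the proposition in degree $\le d-1$ (with possibly larger input size but the same $N$), to which the inductive hypothesis applies. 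Composing these bounds produces an explicit, if fast-growing, $\sigma(d,r,N)$.

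The main obstacle is this last bookkeeping step rather than termination itself: the latter is essentially immediate from well-foundedness, but packaging the bound uniformly is mildly delicate because $N$ is evaluated at the current (growing) $s$ at every iteration. The recursion defining $\sigma$ has to be set up so that this growth is absorbed into the induction on $d$, rather than feeding back into the bound in a circular way.
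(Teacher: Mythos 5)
Your proposal is correct and follows essentially the same route as the paper: iteratively replace a decomposable generator by the lower-degree pieces of its decomposition, prove termination via lexicographic well-ordering of the degree profile, and obtain the uniform bound on $s$ by induction. The paper's induction runs directly on the well-ordered degree tuple rather than on $d$, which absorbs the ``$N$ evaluated at a growing $s$'' issue you flag without any separate bookkeeping, but the substance is the same.
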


\begin{proof}
To produce the $g_j$'s, we execute the following algorithm:
\begin{itemize}
\item[(A)] Initialize with $t=r$ and $g_i=f_i$ for $1 \le i \le r$.
\item[(B)] If $\nu(g_1, \ldots, g_t) > N(t)$ halt with output $(g_1, \ldots, g_t)$.
\item[(C)] Otherwise, make a linear change of variables in the $g_j$'s so that $g_t$ is $N(t)$-decomposable, write $g_t=P(g'_1, \ldots, g'_{N(t)})$, replace $(g_1, \ldots, g_t)$ with $(g_1, \ldots, g_{t-1}, g'_1, \ldots, g'_{N(t)})$ and return to step (B).
\end{itemize}
We must explain why this algorithm halts, and why the length of the final list can be bounded in terms of $d$, $r$, and $N$.

Let $d_i$ be the degree of $g_i$. In step (C), note that each $g'_i$ has degree strictly less than $d_t$. Thus $(d_1, \ldots, d_{t-1}, d'_1, \ldots, d'_{N(t)})$ is strictly smaller than $(d_1, \ldots, d_t)$, if we sort the tuples from largest to smallest and compare lexicographically. Since tuples of non-negative integers under lexicographic order is a well-ordered set, it follows that the procedure terminates.

To bound the length of the final tuple, we proceed inductively. Suppose that on the first pass through we reach step (C). Since the new tuple $(g_1, \ldots, g_{r-1}, g'_1, \ldots, g'_{N(r)})$ is smaller than the initial tuple, we can bound the length of the output purely in terms of $d$, $N$, and the length of this new starting tuple, i.e., $r-1+N(r)$; this is the inductive hypothesis. Thus the length can be bounded simply in terms of $d$, $r$, and $N$, as required.
\end{proof}

\begin{proof}[Proof of Stillman's Conjecture]
Fix $d \in \bN$. For $s \in \bN$, let $N(s)$ be the bound produced by Theorem~\ref{thm:high strength reg seq} with $r=s$; thus, if $g_1, \ldots, g_s$ are homogeneous polynomials of degrees $\le d$ with $\nu(g_1, \ldots, g_s) \ge N(s)$ then $g_1, \ldots, g_s$ is a regular sequence. Now fix $r \in \bN$, and let $s$ be as in the above proposition, with respect to $d$, $r$, and $N(-)$. 

If $f_1, \ldots, f_r$ are homogeneous polynomials of degrees $\le d$, then we can find homogeneous polynomials $g_1, \ldots, g_s$ of degrees $\le d$ and with $\nu(g_1, \ldots, g_s)>N(s)$ such that each $f_i$ can be written as a polynomial $f_i=F_i(g_1,\dots,g_s)$ in the $g_j$'s; thus, each $f_i$ belongs to the subalgebra $\bC[g_1, \ldots, g_s]$ of $\bC[x_1, \ldots, x_n]$. The lower bound on the $\nu$-complexity of the $g_j$'s, together with our choice of $N$, ensures that $g_1, \ldots, g_s$ forms a regular sequence (by Theorem~~\ref{thm:high strength reg seq}).

Let $S=\bC[x_1, \ldots, x_n]$ and let $R \subset S$ be the subalgebra $\bC[g_1, \ldots, g_s]$. Let $I \subset S$ and $J \subset R$ be the ideals generated by the $f$'s. Since $g_1,\dots,g_s$ are regular sequence they are also algebraically independent, and thus $R$ is abstractly a polynomial ring in $s$ variables, and the Hilbert Syzygy Theorem implies that $R/J$ has projective dimension $\le s$ as an $R$-module. So we have a free resolution $F_{\bullet} \to R/J$ of length at most $s$. We now come to the final step, which is standard if a bit technical: since the $g_j$'s form a regular sequence on $S$, we have that $S$ is a free $R$-module (see~\cite[Proposition~2.2.11]{bruns-herzog}), and thus the functor $- \otimes_R S$ is exact.  It follows that $F_{\bullet} \otimes_R S \to (R/J) \otimes_R S$ is a free resolution, and so $(R/J) \otimes_R S$ has projective dimension at most $s$. Since $(R/J) \otimes_R S$ is isomorphic to $S/I$, the result follows.
\end{proof}

\begin{remark}
This argument yields the ``existence of small subalgebras'' result that appears as~\cite[Theorem~B]{ananyan-hochster}, and which is one the main results of that paper.  The subalgebra is $R\subseteq S$ and it is ``small'' because $s$ is independent of $n$, and thus we could have $s\ll n$.
\end{remark}

One key point that we want to emphasize is that the above argument can easily be used to bound many other important invariants or properties.  In other words, even while projective dimension was the original focus in Stillman's Conjecture, the consequences of the Ananyan--Hochster principle are much more wide-reaching.  In fact, this basic framework is so powerful that Ananyan and Hochster themselves write: ``It is difficult to make a comprehensive statement of all the related results that follow from the main theorems'' in \cite[Remark~1.4]{ananyan-hochster}.

\section{From Hilbert's Basis Theorem to $\GL$-noetherianity} \label{s:GLnoeth}

Stillman's Conjecture is a finiteness statement, as are various other instances of Stillman uniformity.  A general approach to obtaining finiteness statements in algebra is through the use of the noetherian property. In this section, we show how one can deduce Stillman uniformity from a recent noetherianity result due to Draisma.

\subsection{The classical picture of Hilbert and Noether}

We begin by recalling the definition of the noetherian property, as it appears in every graduate algebra course:

\begin{definition}
A commutative ring $R$ is {\bf noetherian} if every ascending chain $I_1 \subset I_2 \subset \cdots$ of ideals in $R$ stabilizes (i.e., satisfies $I_n=I_{n+1}$ for $n \gg 0$). Equivalently, $R$ is noetherian if every ideal of $R$ is finitely generated.
\end{definition}

The equivalence of the two conditions in the definition is a standard exercise. While it may not be apparent why the noetherian condition should be natural or important, the work of Hilbert and Noether demonstrated this convincingly. Essentially, many rings one cares about are noetherian, and the noetherian property implies most other finiteness properties of interest. The first point is a consequence of the famous Hilbert Basis Theorem:

\begin{theorem}[Hilbert's Basis Theorem]
The polynomial ring $\bC[z_1, \ldots, z_n]$ is noetherian.
\end{theorem}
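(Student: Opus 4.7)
The plan is to prove the slightly stronger statement that if $R$ is any noetherian commutative ring, then $R[z]$ is noetherian, and then induct on $n$. The base case of the induction is $\bC$ itself, which is a field and hence trivially noetherian (its only ideals are $0$ and $\bC$). For the inductive step, writing $\bC[z_1,\ldots,z_n] = \bC[z_1,\ldots,z_{n-1}][z_n]$, the claim follows immediately from the stronger statement.

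To prove that $R[z]$ is noetherian when $R$ is, I would verify the finite generation condition directly. Let $I \subseteq R[z]$ be an arbitrary ideal; I need to produce a finite set of generators. For each integer $d \geq 0$, define $J_d \subseteq R$ to be the set consisting of $0$ together with the leading coefficients of all polynomials in $I$ of degree exactly $d$. A quick check shows each $J_d$ is an ideal of $R$, and the inclusions $J_d \subseteq J_{d+1}$ hold because multiplying a degree-$d$ element of $I$ by $z$ produces a degree-$(d+1)$ element of $I$ with the same leading coefficient.

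Now I would exploit the noetherianity of $R$ twice. First, the ascending chain $J_0 \subseteq J_1 \subseteq \cdots$ must stabilize at some $J_N$. Second, each of the finitely many ideals $J_0, J_1, \ldots, J_N$ is itself finitely generated; for each $d \leq N$, pick generators $a_{d,1}, \ldots, a_{d,k_d}$ of $J_d$, and for each such $a_{d,j}$ lift to a polynomial $f_{d,j} \in I$ of degree $d$ with leading coefficient $a_{d,j}$. I claim the finite collection $\{f_{d,j}\}$ generates $I$. To verify this, I would take any $g \in I$ and argue by strong induction on $\deg g$: if $\deg g = d$ with leading coefficient $b$, then either $d \leq N$ and $b$ is an $R$-linear combination of the $a_{d,j}$, or $d > N$ and $b \in J_N$ is an $R$-linear combination of the $a_{N,j}$, which lets us subtract an appropriate $R[z]$-combination of the generators (using $z^{d-N} f_{N,j}$ in the second case) to obtain an element of $I$ of strictly lower degree; the induction concludes.

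The main obstacle, conceptually, is finding the right invariant to stratify $I$ by: the naive attempt to directly finitely generate $I$ stalls because an ideal in $R[z]$ can contain polynomials of arbitrarily high degree. The leading coefficient construction is the key trick, bundling the infinitely many degrees into a single ascending chain in $R$ so that noetherianity of $R$ can be applied. Everything else is routine bookkeeping: checking that $J_d$ is really an ideal, that the chain is ascending, and that the degree-reduction step in the final induction actually lands in $I$.
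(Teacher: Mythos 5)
Your proof is correct: this is the standard argument for the Hilbert Basis Theorem, reducing to the statement that $R$ noetherian implies $R[z]$ noetherian via the chain of leading-coefficient ideals $J_0 \subseteq J_1 \subseteq \cdots$, and all the steps (that each $J_d$ is an ideal, that the chain ascends, and that the degree-reduction induction terminates) check out. The paper states this classical theorem without proof, so there is no alternative argument to compare against; your write-up is exactly the textbook proof one would supply.
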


As Hilbert was well aware, there is an intimate link between commutative algebra and algebraic geometry, and so Hilbert's Basis Theorem therefore has geometric implications. It is with these sorts of results that our interests lie, so we now explain them. For a set $S \subset \bC[z_1, \ldots, z_n]$ of polynomials, let $V(S) \subset \bC^n$ be their common zero locus:
\begin{displaymath}
V(S) = \{ z \in \bC^n \mid \text{$\phi(z)=0$ for all $\phi \in S$} \}.
\end{displaymath}
It is an easy exercise to verify that $V(S)=V(I)$ where $I$ is the ideal of $\bC[z_1, \ldots, z_n]$ generated by $S$, so we may as well restrict our attention to ideals when considering $V(-)$. Subsets of $\bC^n$ of the form $V(I)$ are called {\bf closed algebraic sets}, and, in the dictionary between commutative algebra and algebraic geometry, they correspond to ideals. In fact, another classical theorem of Hilbert, the Nullstellensatz, implies that $I \mapsto V(I)$ is a bijection between a certain class of ideals---the radical ideals---and closed algebraic sets. As the name suggests, there is a topology on $\bC^n$ in which the closed sets are exactly the closed algebraic sets; this is the {\bf Zariski topology}.

We have just seen that closed algebraic sets are the geometric counterpart to ideals. What then is the geometric analog of the noetherian property? Observing that an inclusion $I \subset J$ of ideals yields an inclusion $V(J) \subset V(I)$ in the opposite direction on algebraic sets, we are led to the following definition:

\begin{definition}
A topological space $X$ is {\bf noetherian} if every descending chain $\cdots \subset Z_2 \subset Z_1 \subset X$ of closed subsets of $X$ stabilizes, i.e., satisfies $Z_n=Z_{n+1}$ for $n \gg 0$.
\end{definition}

The above discussion immediately yields the following geometric form of the Hilbert Basis Theorem:

\begin{theorem}[Hilbert Basis Theorem, geometric form]
The space $\bC^n$, equipped with the Zariski topology, is a noetherian topological space.
\end{theorem}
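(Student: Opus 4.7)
The plan is to deduce topological noetherianity of $\bC^n$ directly from Hilbert's Basis Theorem in its algebraic form, by exploiting the order-reversing correspondence between ideals and closed algebraic sets. Given a descending chain $Z_1 \supset Z_2 \supset \cdots$ of Zariski-closed subsets of $\bC^n$, I would first extract an ascending chain of ideals, invoke the algebraic Basis Theorem to stabilize it, and then transport the stabilization back to the $Z_n$.

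Concretely, for each subset $Z \subset \bC^n$ I would introduce the vanishing ideal
\[ I(Z) = \{\phi \in \bC[z_1,\ldots,z_n] \mid \phi(z)=0 \text{ for all } z \in Z\}. \]
Each inclusion $Z_{n+1} \subset Z_n$ yields $I(Z_n) \subset I(Z_{n+1})$, so the ideals form an ascending chain, which must stabilize at some index $N$ by Hilbert's Basis Theorem. The final step is the identity $V(I(Z)) = Z$ for any closed algebraic set $Z$: writing $Z = V(J)$ for some ideal $J$, the containment $J \subset I(V(J))$ is automatic from the definitions, so $V(I(Z)) \subset V(J) = Z$, and the reverse inclusion $Z \subset V(I(Z))$ is immediate since every element of $I(Z)$ vanishes on $Z$ by construction. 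Applying this to each $Z_n$ converts the equality $I(Z_n) = I(Z_{n+1})$ into $Z_n = V(I(Z_n)) = V(I(Z_{n+1})) = Z_{n+1}$ for all $n \ge N$.

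The only subtle ingredient is the identity $V(I(Z))=Z$ for closed algebraic sets; this should not be confused with the Nullstellensatz, which identifies $I(V(J))$ with the radical of $J$ and thereby establishes the full bijection between radical ideals and closed algebraic sets. Here we need only the formal direction, which follows from the elementary properties of $V$ and $I$ without any appeal to the Nullstellensatz. Beyond this, no new ideas are required: the operator $V$ simply translates the ascending chain condition on ideals into the descending chain condition on closed subsets, so the main obstacle---if one can call it that---is merely the bookkeeping of verifying this translation carefully.
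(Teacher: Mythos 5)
Your proof is correct and follows exactly the route the paper intends: the paper derives the geometric form from the algebraic Basis Theorem via the order-reversing correspondence between ideals and closed algebraic sets, which is precisely your argument. Your explicit verification of $V(I(Z))=Z$ for closed sets (and your remark that this formal identity needs no Nullstellensatz) simply fills in the details the paper leaves to the reader.
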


\begin{remark}
Suppose that $V$ is a finite dimensional complex vector space. One then has the notion of a polynomial function $V \to \bC$: these are just polynomials in linear functionals on $V$. One can therefore define closed algebraic sets in $V$ and the Zariski topology on $V$, just like on $\bC^n$, and the Hilbert Basis Theorem continues to apply.
\end{remark}

\subsection{A sample application of Hilbert's Theorem} \label{ss:hilbapp}

We have stated that the noetherian property implies most other finiteness properties one might want. For seasoned algebraists, this principle is second nature. For the benefit of readers not in this group, we now provide one example.

Suppose that $f_1, \ldots, f_r \in \bC[x_1, \ldots, x_n]=R$ are homogeneous polynomials, and let $I$ be the ideal they generate. Since $I$ is a homogeneous ideal, the quotient ring $R/I$ is graded. The {\bf Hilbert function} of $R/I$ is the function $\bN \to \bN$ defined by
\begin{displaymath}
\HF_{R/I}(m) = \dim_{\bC}(R/I)_m,
\end{displaymath}
where $(R/I)_m$ denotes the degree $m$ piece of $R/I$.

\begin{example} \label{ex:hilb}
Suppose that $f \in R$ has degree $d$ and is non-zero, and consider the principal ideal $I=(f)$ it generates. The degree $m$ piece of $I$ consists of all polynomials of the form $gf$ where $g$ has degree $m-d$; note that $g$ is uniquely determined from $gf$ since $R$ is an integral domain. We thus see that the dimension of $I_m$ coincides with that of $R_{m-d}$, using the convention that this has dimension~0 for $m<d$. It follows that
\begin{displaymath}
\HF_{R/I}(m) = \dim{R_m}-\dim{R_{m-d}}.
\end{displaymath}
Since $R_m$ is the vector space of degree $m$ polynomials in $n$ variables, we have $\dim{R_m}=\binom{n+m-1}{n-1}$, and thus the above is an explicit formula. Of course, if $f=0$ then $I=(f)=0$ as well, and so $\HF_{R/I}(m)=\dim{R_m}$.
\end{example}

An interesting problem is to try to understand what the possibilities for the Hilbert function are, perhaps under constraints on the $f$'s. In general, this is a difficult problem. However, the noetherian property yields an important finiteness result, without much effort:

\begin{theorem} \label{thm:finhf}
Let $R=\bC[x_1,\dots,x_n]$ and fix $d_1, \ldots, d_r \in \bN$. As $(f_1, \ldots, f_r)$ varies over all tuples in $R$ of homogeneous polynomials of degrees $(d_1, \ldots, d_r)$ in $n$ variables, only finitely many Hilbert functions appear.
\end{theorem}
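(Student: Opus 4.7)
The plan is to parametrize tuples by the finite-dimensional $\bC$-vector space $V = R_{d_1} \times \cdots \times R_{d_r}$ and to stratify it into finitely many locally closed subsets on each of which the Hilbert function is constant. The Hilbert Basis Theorem makes $V$ a noetherian topological space in the Zariski topology, which is the underlying input that will power a noetherian induction.

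The first key step is upper semicontinuity: for every $m \geq 0$ and $h \geq 0$, the subset
\[Z_{m,h} = \{v \in V : \HF_{R/I_v}(m) \geq h\}\]
is Zariski closed in $V$ (here $I_v = (f_1, \ldots, f_r)$ for $v = (f_1, \ldots, f_r)$). To verify this, one writes $(I_v)_m$ as the image of the $\bC$-linear multiplication map $\bigoplus_i R_{m-d_i} \to R_m$ sending $(g_i)$ to $\sum_i g_i f_i$; the matrix entries are linear in the coordinates on $V$, so the condition that the rank be at most $\dim R_m - h$ is cut out by the vanishing of finitely many minors.

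Next I would build the universal family. Let $A = \bC[V]$, form $\tilde R = A[x_1, \ldots, x_n]$ together with universal polynomials $F_i \in \tilde R_{d_i}$ whose coefficients are the tautological coordinates on $V$, and set $\tilde M = \tilde R / (F_1, \ldots, F_r)$. Each graded piece $\tilde M_m$ is then a finitely generated $A$-module whose fiber at a closed point $v \in V$ recovers $(R/I_v)_m$. Running noetherian induction on closed subsets $Z \subset V$, and reducing to the case that $Z$ is irreducible with defining prime $\mathfrak{p}_Z$, I would apply graded generic freeness to $\tilde M \otimes_A (A/\mathfrak{p}_Z)$: this produces a dense open subset $U \subset Z$ over which every $\tilde M_m$ is locally free of constant rank, so the Hilbert function is a single fixed function on $U$. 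The proper closed complement $Z \setminus U$ then contributes only finitely many additional Hilbert functions by the inductive hypothesis, completing the induction.

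The main obstacle is the graded generic freeness step, which requires a \emph{single} nonzero element of $A/\mathfrak{p}_Z$ whose localization trivializes all graded pieces $\tilde M_m$ simultaneously, rather than a different element for each $m$. This is the content of Grothendieck's generic freeness theorem (EGA IV, 6.9.1) applied to $\tilde M$ as a finitely generated graded module over the finitely generated graded $A$-algebra $\tilde R$; its proof proceeds via a finite homogeneous presentation of $\tilde M$ and inverting a suitable product of pivot minors of the presentation matrix. An alternative route that avoids this machinery is to invoke a Bayer--Mumford-type regularity bound depending only on $n, d_1, \ldots, d_r$: past the resulting degree the Hilbert function equals a polynomial of bounded degree and bounded leading coefficient, and only finitely many possibilities remain for the initial segment.
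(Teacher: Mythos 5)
Your proposal is correct and follows essentially the same route as the paper: view the tuples as points of a finite-dimensional affine space, which is noetherian by Hilbert's Basis Theorem, show that the Hilbert function is generically constant on each irreducible closed subset, and conclude by noetherian induction on the resulting strictly descending chain of closed sets. The only difference is one of detail: where the paper appeals to ``standard algebraic methods'' for the generic constancy step, you correctly identify that the precise tool is graded generic freeness applied to the universal family, and you rightly flag the need for a single localizing element working for all graded pieces simultaneously.
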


\begin{proof}
Let $X_{d,n}$ be the set of all homogeneous polynomials of degree $d$ in $n$ variables, and let $Y=X_{d_1,n} \times \cdots \times X_{d_r,n}$. This is a finite dimensional complex vector space, and thus carries a Zariski topology; furthermore, equipped with this topology, $Y$ is a noetherian space by Hilbert's Basis Theorem.  Each point $y\in Y$ corresponds to a tuple $(f_1,\dots,f_r)$ of polynomials in $\bC[x_1,\dots,x_n]$ with $\deg(f_i)=d_i$.  For a point $y \in Y$, we let $H_y$ denote the Hilbert function for $R/I$ where $I$ is generated by the tuple corresponding to $y$.  We must show that the set $\{H_y \mid y\in Y\}$ is finite.

A common theme in algebraic geometry is that objects exhibit generic behavior. We now explain what this means for $H_y$. Suppose that $Z \subset Y$ is a non-empty Zariski closed set. By noetherianity of $Y$, the space $Z$ can be written as a finite union $Z_1 \cup \cdots \cup Z_k$ where each $Z_i$ is an irreducible closed set (i.e., it does not non-trivially decompose into a union of closed subsets). One can then show, using standard algebraic methods, that each $Z_i$ contains a non-empty open subset $U_i$ such that $y \mapsto H_y$ is constant on $U_i$. This is what we mean by $H$ admitting a generic behavior. We saw this already in Example~\ref{ex:hilb}: there the generic behavior occurred on the open set $f \ne 0$, while degenerate behavior appeared on the closed set $f=0$.

Keeping the above notation, let $U=U_1 \cup \cdots \cup U_k$. This is a non-empty open subset of $Z$, and so its complement $Z'=Z \setminus U$ is a proper closed subset of $Z$. Furthermore, we know that off of $Z'$, we see only finitely many values for $H$, since $H$ is constant on each $U_i$.

The noetherian property now gives us the desired result. Indeed, let $Z_1=Y$. The previous paragraph produces a proper closed subset $Z_2 \subset Z_1$ (what was called $Z'$ there) such that $H$ takes on finitely many values on $Z_1 \setminus Z_2$. Now apply the previous paragraph again to $Z_2$, assuming it is non-empty, and get $Z_3 \subset Z_2$ with analogous behavior. This process thus produces a strictly descending chain $\cdots \subset Z_2 \subset Z_1$ of closed subsets of $Y$, and therefore must terminate in finitely many steps by the noetherian property. Since $H$ takes finitely many values on each piece $Z_i \setminus Z_{i+1}$ and there are finitely many pieces, the result follows.
\end{proof}

\subsection{Draisma's Theorem}

In the previous section, we considered the set $X_{d,n}$ of homogeneous polynomials of degree $d$ in $n$ variables as a geometric object, and saw that we could use ideas from algebraic geometry to prove an interesting result about polynomials. We would now like to apply similar ideas to the study of polynomials of fixed degree in an arbitrary number of variables to prove instances of Stillman uniformity. As a first step in this direction, we need an analog of Hilbert's Basis Theorem in this new setting.

To begin, let $X_d$ denote the space of all homogeneous polynomials of degree $d$ in an arbitrary number of variables. Like $X_{d,n}$, this is a complex vector space; however, unlike $X_{d,n}$, it is infinite dimensional. Nonetheless, we define the Zariski topology on $X_d$ analogously to before. Precisely, an element of $X_d$ can be written in the form $\sum c_{\alpha} x^{\alpha}$, the sum being over degree $d$ multi-indices $\alpha$. One can regard the $c_{\alpha}$'s as coordinate functions on $X_d$. By a polynomial function on $X_d$, we mean a polynomial in the $c_{\alpha}$'s. A closed algebraic set in $X_d$ is then a set that can be realized as the common zero locus of a set of polynomial functions.\footnote{The astute reader will recognize $X_d=\varinjlim X_{d,n}$ as an ind-variety. The ``Zariski topology'' defined above is ad hoc, and does not come from a general construction on ind-varieties. However, for $\GL_{\infty}$-stable sets, the condition of being closed in our topology is the same as being closed in the ind-topology.  See \cite[\S2]{genstillman}.} More generally, one can define closed algebraic subsets of $X_{d_1} \times \cdots \times X_{d_r}$ for any $d_1, \ldots, d_r \in \bN$.

Based on the discussion thus far, one might expect us to now say that $X_d$ is a noetherian topological space. However, this is far from the truth: it is simply too large! For example, just consider the case $d=1$. An element of $X_1$ can be written in the form $\sum_{i \ge 1} c_i x_i$. Let $Z_n \subset X_1$ be the closed algebraic set defined by $c_1=c_2=\cdots=c_n=0$. Then the $Z$'s form an infinite strictly descending chain of closed sets, which shows that $X_1$ is not noetherian. Similar examples can be constructed for $X_d$, for any $d \ge 1$.

Not long ago, this would have been the anti-climactic end of the story. However, in the last decade an important principle has emerged (which is the basis of the burgeoning field of representation stability, as well as the results discussed in \S\ref{subsec:GL}): many large objects that have a large amount of symmetry are noetherian when the symmetry is appropriately taken into account. For the present situation, the following definition makes this idea precise:

\begin{definition}
Let $X$ be a topological space equipped with an action of a group $G$. We say that $X$ is {\bf $G$-noetherian} if every descending chain of $G$-stable closed subsets stabilizes.
\end{definition}

To consider this property in relation to the space $X_d$, we must first specify our group. The group $\GL_n$ of invertible $n \times n$ matrices acts on the set $X_{d,n}$ of homogeneous polynomials of degree $d$ in $n$ variables via linear changes of variables. Thus the group $\GL_{\infty}=\bigcup_{n \ge 1} \GL_n$ acts on the set $X_d=\bigcup_{n \ge 1} X_{d,n}$. Since most natural properties in commutative algebra (such as projective dimension of an ideal) are invariant under linear changes of variables, we can expect most interesting subsets of $X_d$ to be $\GL_{\infty}$-stable. It is therefore reasonable to restrict our attention to $\GL_{\infty}$-stable subsets.

\begin{example}
Any two non-zero elements of $X_1$ belong to the same $\GL_{\infty}$-orbit. Thus there are precisely three $\GL_{\infty}$-stable closed subsets of $X_1$: the whole space, the set $\{0\}$, and the empty set.  It follows that $X_1$ is $\GL_\infty$-noetherian.
\end{example}

\begin{example}
An element of $X_2$ can be regarded as a quadratic form in some number of variables, and thus has an associated rank (the rank of the corresponding symmetric matrix). The theory of quadratic forms over the complex numbers shows that any two forms of the same rank belong to the same $\GL_{\infty}$-orbit. Furthermore, it is not difficult to see that the set $Z_n \subset X_2$ of forms of rank $\le n$ is closed: it is the common zero locus of certain minors of the corresponding symmetric matrix. We thus see that the $\GL_{\infty}$-stable closed subsets of $X_2$ are the $Z_n$'s together with the whole space and the empty space. It follows that $X_2$ is $\GL_{\infty}$-noetherian.
\end{example}

As we have just seen, one can classify the $\GL_{\infty}$-stable closed subsets of $X_1$ and $X_2$, and thus establish the $\GL_{\infty}$-noetherian property for them ``by hand.'' However, in most other situations, the $\GL_{\infty}$-stable closed subsets defy classification, and proving the property is non-trivial. The first major step forward came in Eggermont's work \cite{eggermont}, in which he established the noetherian property for $X_2 \times \cdots \times X_2$ (any number of factors). Shortly thereafter, Derksen, Eggermont, and Snowden \cite{DES} handled the $X_3$ case. And shortly after that, Draisma \cite{draisma} established the general case\footnote{In fact, all the cited works prove their results for the \emph{inverse} limit of the $X_{d,n}$, while our $X_d$ is the \emph{direct} limit. However, in \cite{genstillman} we showed that one can go back and forth between the two spaces.}:

\begin{theorem}[Draisma]
For any $d_1,\ldots,d_r \in \bN$, the space $X_{d_1} \times \cdots \times X_{d_r}$ is $\GL_{\infty}$-noetherian.
\end{theorem}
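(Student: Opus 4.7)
The plan is induction on the multiset of degrees $(d_1, \ldots, d_r)$, ordered lexicographically with the largest entry first (much like the well-ordering in Section~\ref{s:proof of stillman}). The base case $\max d_i = 1$ is straightforward linear algebra: $\GL_\infty$-orbits on $X_1^r$ are indexed by the dimension of the span of the component linear forms, giving only finitely many closed stable subsets. For the inductive step, let $Y \subseteq X_{d_1} \times \cdots \times X_{d_r}$ be a $\GL_\infty$-stable Zariski-closed subset. I aim to prove a dichotomy: either $Y$ equals the whole ambient space, or there is a uniform bound $N$ with $\nu(f_1, \ldots, f_r) \le N$ for every tuple in $Y$.

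To establish the dichotomy, suppose $Y$ contains tuples $(f_{1,i}, \ldots, f_{r,i})_{i \ge 1}$ with $\nu \to \infty$; fix a non-principal ultrafilter on $\bN$ and pass to homogeneous germs $[f_{j,\bullet}] \in \bS$. Any $n$-decomposition of these germs would, pointwise, give an $n$-decomposition of $(f_{1,i}, \ldots, f_{r,i})$ for almost every $i$, contradicting the growth of $\nu$; hence the germs are jointly indecomposable. By Corollary~\ref{cor:ultra poly}, they extend to a maximal jointly indecomposable family and so appear as a subset of the variables in some polynomial presentation of $\bS$ over ${}^*\bC$. Any polynomial $P$ in the coefficients $c_\alpha$ that cuts out $Y$ yields, upon substitution, an algebraic relation among the germs $[f_{j,\bullet}]$ in $\bS$ (via a straightforward ultraproduct transfer); but a polynomial ring admits no non-trivial algebraic relations among distinct variables, forcing $P = 0$. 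Thus $Y$ has no non-trivial defining equations and fills the ambient space.

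Now suppose $Y$ is proper with $\nu$-bound $N$. Iteratively applying the decomposition procedure of Section~\ref{s:proof of stillman} (after a suitable $\GL_r \times \GL_\infty$-change of basis) presents each tuple $(f_1, \ldots, f_r) \in Y$ as $\mathbf{F}(g_1, \ldots, g_s)$, with $s$ bounded in terms of $N$, $\max d_i$, and $r$, with each $g_j$ homogeneous of degree strictly less than $\max d_i$, and with $\mathbf{F}$ a tuple of polynomials in some finite-dimensional vector space $W$. This exhibits $Y$ as a subset of the image of a $\GL_\infty$-equivariant morphism $W \times X_{e_1} \times \cdots \times X_{e_s} \to X_{d_1} \times \cdots \times X_{d_r}$, whose source is $\GL_\infty$-noetherian: $W$ by Hilbert's Basis Theorem, the $X_{e_j}$ factors by the inductive hypothesis (since $e_j < \max d_i$), and noetherianity passes to products. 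Because closed $\GL_\infty$-stable subsets of a $\GL_\infty$-equivariant image of a noetherian space are themselves noetherian, $Y$ is $\GL_\infty$-noetherian, completing the induction.

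The principal obstacle is the transfer step in the dichotomy: one must carefully verify that polynomial conditions defining $Y$ in the infinite-dimensional space $X_{d_1} \times \cdots \times X_{d_r}$ translate to genuine algebraic relations among the germs $[f_{j,\bullet}]$ in $\bS$. Subtleties arise because $Y$ is cut out by polynomials involving only finitely many coefficients at a time, while the germs live over all variables simultaneously; exploiting $\GL_\infty$-equivariance to bring the relevant coordinates into alignment with the ultrafilter construction is the crux. A secondary non-trivial point is certifying that the parametrization in the reduction step has a genuinely $\GL_\infty$-noetherian source \emph{and} that descending chains of stable closed subsets in $Y$ pull back to descending chains of stable closed subsets in the source; without this compatibility the induction does not close.
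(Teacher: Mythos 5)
The paper states Draisma's Theorem as an external result (citing \cite{draisma}) and gives no proof, so your proposal must stand on its own. Its architecture --- induction on the sorted degree tuple, a dichotomy between ``$Y$ is everything'' and ``$Y$ has uniformly bounded $\nu$-complexity,'' then a parametrization by lower-degree data --- does match the shape of Draisma's actual argument and of the Bik--Draisma--Eggermont theorem discussed in \S\ref{subsec:bde}. But the dichotomy, which is the heart of the theorem, is not established by what you wrote. A polynomial $P$ in the coordinates $c_\alpha$ cutting out $Y$ is a polynomial condition on the \emph{coefficients} of the $f_j$; no substitution turns it into an algebraic relation $F(f_1,\dots,f_r)=0$ holding in the ring $\bC[x_1,x_2,\dots]$ or in $\bS$. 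So algebraic independence of jointly indecomposable germs yields no contradiction. Concretely, the coordinate function ``coefficient of $x_1^2$'' vanishes on the sequence $f_i=x_2^2+\cdots+x_i^2$, whose germ is indecomposable; of course its zero locus is not $\GL_\infty$-stable, which shows that stability of $Y$ must carry the entire argument, whereas you invoke it only as an unexamined remark. The statement you actually need is that the $\GL_\infty$-orbit of a tuple of jointly indecomposable germs is dense --- equivalently, that tuples of sufficiently high $\nu$-complexity specialize, under variable substitution, onto arbitrary tuples of the same degrees. That is genuinely stronger than algebraic independence, and its known proofs go through Draisma-style parametrizations, so as written your dichotomy is either unproved or circular.

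Two further gaps sit in the reduction step. First, the uniform bound $\nu\le N$ holds for points of $Y$, but after one pass of the decomposition procedure the new tuple $(f_1,\dots,f_{r-1},g_1',\dots,g_{N}')$ is no longer a point of $Y$, so nothing bounds its $\nu$-complexity and nothing guarantees the remaining top-degree forms ever get decomposed; you need a dichotomy controlling the strength of every top-degree component, not of a single linear combination. Second, ``noetherianity passes to products'' is false in this setting: the Zariski topology on $W\times X_{e_1}\times\cdots\times X_{e_s}$ is strictly finer than the product topology, so $\GL_\infty$-noetherianity of the factors does not formally give it for the product; the lemma that $Z\times W$ remains $G$-noetherian for $W$ a finite-dimensional affine space is one of the substantive steps in Draisma's proof. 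By contrast, the descent you flag as a secondary worry is fine: a strictly descending chain of stable closed subsets of the image of an equivariant morphism pulls back to a strictly descending chain of stable closed subsets of the source.
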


We have thus completed our goal of finding an analog of the Hilbert Basis Theorem in the setting of Stillman uniformity.  (In fact, Draisma proves an even stronger result that applies to more general polynomial functors, but a discussion of this would take us too far afield.)

\subsection{Application of Draisma's Theorem to Stillman uniformity}

In \S \ref{ss:hilbapp}, we saw that Hilbert's Basis Theorem, in its geometric form, could be applied to obtain interesting finiteness properties of polynomials in a fixed number of variables. We now show that, in exactly the same way, Draisma's Theorem can be used to obtain finiteness properties for polynomials in an arbitrary number of variables. Specifically, we present the second proof of Stillman's Conjecture from \cite{stillman}.

\begin{theorem}[Stillman's Conjecture] \label{thm:stillmanR}
Fix $d_1,\ldots,d_r \in \bN$. Then there exists $B \in \bN$ with the following property: if $f_1, \ldots, f_r$ are homogeneous polynomials of degrees $(d_1, \ldots, d_r)$ then the ideal they generate has projective dimension $\le B$.
\end{theorem}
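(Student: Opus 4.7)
The plan is to follow a $\GL_\infty$-noetherian induction on $\GL_\infty$-stable closed subsets of $Y := X_{d_1} \times \cdots \times X_{d_r}$, using the idealized Ananyan--Hochster principle for $\bR$ (Corollary~\ref{cor:inv limit}) to control the generic projective dimension on each stratum. This mirrors the proof of Theorem~\ref{thm:finhf} for Hilbert functions, and replaces the ``small subalgebra'' step of \S\ref{s:proof of stillman} with a more geometric mechanism.

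First I would verify that projective dimension gives a well-defined $\GL_\infty$-invariant function $\operatorname{pd} \colon Y \to \bN \cup \{\infty\}$: for $y \in Y$ corresponding to a tuple $(f_1, \ldots, f_r)$, only finitely many variables appear, so one may compute $\operatorname{pd}(y)$ in any sufficiently large polynomial ring $\bC[x_1, \ldots, x_n]$, and a short flatness argument shows the answer is independent of $n$. To prove the theorem it suffices to show that $\operatorname{pd}$ takes only finitely many values on $Y$, since the supremum then serves as the bound $B$.

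Next, following the template of Theorem~\ref{thm:finhf}, I would establish the following key lemma by $\GL_\infty$-noetherian induction: every non-empty $\GL_\infty$-stable closed subset $Z \subseteq Y$ contains a non-empty $\GL_\infty$-stable relatively open subset $U \subseteq Z$ on which $\operatorname{pd}$ is bounded. Granting this, one iteratively strips off such opens; the process terminates by Draisma's Theorem, exhibiting $\operatorname{pd}$ as taking only finitely many values on $Y$ and completing the proof.

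The bulk of the work is constructing this generic open $U$, and this is where $\bR$ and Corollary~\ref{cor:inv limit} enter. The idea is to associate to an irreducible component of $Z$ a ``generic tuple'' $(\widetilde f_1, \ldots, \widetilde f_r) \in \bR^r$. Since by Corollary~\ref{cor:inv limit} the ring $\bR$ is abstractly a polynomial ring, one may write $\widetilde f_i = F_i(g_1, \ldots, g_s)$ for some jointly indecomposable, hence algebraically independent, elements $g_1, \ldots, g_s \in \bR$. The generic ideal then lives in the polynomial subring $\bC[g_1, \ldots, g_s] \subset \bR$, so its projective dimension there is at most $s$ by Hilbert's Syzygy Theorem. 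One would then spread this presentation out to a Zariski-open $\GL_\infty$-stable subset of $Z$ on which the $g_j$'s specialize to a regular sequence of lower-degree polynomials, and invoke the change-of-rings argument of \S\ref{s:proof of stillman} (using flatness of $S$ over $\bC[g_1,\ldots,g_s]$) to deduce $\operatorname{pd}(y) \le s$ uniformly for $y$ in that open set.

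The main obstacle will be making sense of the ``generic point'' of a $\GL_\infty$-stable irreducible closed subset of $Y$ as an honest element of $\bR^r$, and descending the polynomial-ring structure provided by Corollary~\ref{cor:inv limit} back to a non-empty open subset of $Z$. The rest of the argument is fairly routine commutative algebra in the spirit of \S\ref{s:proof of stillman}, but this generic-point step genuinely uses the full strength of both Draisma's Theorem (to organize the stratification and guarantee termination) and Corollary~\ref{cor:inv limit} (to linearize the generic behavior).
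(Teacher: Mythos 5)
Your proposal follows the same route as the paper's proof of Theorem~\ref{thm:stillmanR}: a $\GL_\infty$-noetherian induction on stable closed subsets of $X_{d_1}\times\cdots\times X_{d_r}$ modeled on Theorem~\ref{thm:finhf}, with Draisma's Theorem guaranteeing termination and the idealized Ananyan--Hochster principle for $\bR$ (Corollary~\ref{cor:inv limit}) supplying the generic boundedness of projective dimension on each irreducible stratum (the paper defers this last step to \cite[Theorem~5.12]{stillman}, which, as you anticipate, works with the generic tuple over the function field of the stratum and uses the polynomial-ring structure of the bounded-degree series ring over that field). The only cosmetic difference is that you establish generic \emph{boundedness} via the regular-sequence/flat base change mechanism of \S\ref{s:proof of stillman}, whereas the cited result gives generic \emph{constancy}; either suffices for the noetherian induction.
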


\begin{proof}
Let $Y=X_{d_1} \times \cdots \times X_{d_r}$. Each point of $y \in Y$ corresponds to some tuple $(f_1,\ldots,f_r)$ and thus has an associated ideal and projective dimension $p(y)$. We aim to show that $\{ p(y) \mid y \in Y\}$ is finite following the argument used in the proof of Theorem~\ref{thm:finhf}.

To apply this argument, we need to know that $p$ exhibits generic behavior. In fact, this is true: given any irreducible closed subset $Z$ of $Y$, there is a non-empty open subset of $Z$ on which $p$ is constant. This follows from \cite[Theorem~5.12]{stillman}. In the setting of Theorem~\ref{thm:finhf}, we said that the generic behavior of Hilbert functions can be established by standard methods in algebraic geometry. To get the generic behavior of $p$, we employ the same methods; however, since we are now in an infinite dimensional setting, there are a number of complications, and our proof crucially relies on the idealized Ananyan--Hochster principle for $\bR$ (i.e., Corollary~\ref{cor:inv limit}) to ensure that things behave well.

The rest of the argument now goes through identically. Start with $Z_1=Y$. There is then an open set $Z_1 \setminus Z_2$ on which $p$ takes finitely many values. Now argue similarly for $Z_2$, and then use Draisma's Theorem to conclude that the chain $\cdots \subset Z_2 \subset Z_1$ is finite. There are two facts we have tacitly used here: first, a $\GL_{\infty}$-stable subset of $Y$ has finitely many irreducible components, each of which is $\GL_{\infty}$-stable; and second, one can take the set $Z_2$ (and all subsequent $Z_i$'s) to be $\GL_{\infty}$-stable, essentially because $p$ is $\GL_{\infty}$-invariant (that is, $p(gy)=p(y)$ for $y \in Y$ and $g \in \GL_{\infty})$.
\end{proof}

\begin{remark}
The proof of Stillman's Conjecture given above appears in~\cite[\S5]{stillman}.  It provides a third proof, following Ananyan--Hochster's original proof, and our ultraproduct proof from \cite{stillman} outlined in \S \ref{s:reggerm}--\S \ref{s:proof of stillman}. This third proof is totally different in character from those other two proofs, as it does not go through the theory of small subalgebras.
\end{remark}

\begin{remark}
Draisma's Theorem can be used to establish many other instances of Stillman uniformity; see \S \ref{ss:genstillman}.
\end{remark}

\subsection{Draisma--Laso\'n--Leykin's initial ideal proof}
One seemingly natural way to approach Stillman's Conjecture would be through the theory Gr\"obner bases.  Shortly after \cite{stillman} appeared, Draisma, Laso\'n, and Leykin \cite{draisma-lason-leykin} used this type of approach to give a fourth proof of Stillman's Conjecture.  We now give a very brief overview of this fourth proof; we assume familiarity with topics related to Gr\"obner bases, such as generic initial ideals.

The essential idea in \cite{draisma-lason-leykin} is to develop a good theory of Gr\"obner bases in the ring $\bR$ of bounded-degree series, using the graded revlex term order.\footnote{The graded revlex term order is a natural term order for $\bR$ because it interacts well with the maps $\bR\to \bC[x_1,\dots,x_n]$ for all $n$.}  In particular, Draisma, Laso\'n, and Leykin develop a version of Buchberger's Algorithm which works for bounded-degree series.  The finiteness properties of this algorithm stem from Draisma's Theorem and from Corollary~\ref{cor:inv limit}, and yield a finiteness result for generic initial ideals of ideals generated in specified degrees~\cite[Theorem~3]{draisma-lason-leykin}.  Since projective dimension is invariant under passing to the generic initial ideal (using graded revlex)~\cite{bayer-stillman}, this implies Stillman's Conjecture.

\section{Related Topics}\label{s:topics}
We end by discussing some topics related to the questions raised in Stillman's Conjecture and elsewhere.

\subsection{Changing the base field}\label{subsec:field}
In this article, we chose the base field $\bC$ for expository purposes.  Just about every theorem in the paper holds over an arbitrary field $\bk$.
(Due to the use of partial derivatives, the proof of Theorem~\ref{thm:algindseries} outlined in \S\ref{subsec:proof 4} requires much more care in positive characteristic.  But the central ideas remain the same.)

Ananyan and Hochster's original work was over an arbitrary algebraically closed field~\cite{ananyan-hochster}; this is sufficient for proving Stillman's Conjecture over any field, though some of their auxiliary results on strength and its consequences were not known over other fields.  We extended many---but not all---of those results to an arbitrary perfect field in~\cite{stillman} and then to possibly imperfect fields in~\cite{imperfect}.  The initial ideal proof of~\cite{draisma-lason-leykin} also holds over an arbitrary field.  

While one can easily define $\nu$-complexity over any field, some incarnations of the Ananyan--Hochster principle that hold over perfect fields will fail over fields that are not perfect.  This is because $\nu$-complexity can depend on the base field (i.e., it can change after passing to a larger field). For instance, let $\bk=\bF_p(a_1,a_2,a_3,\dots)$ be a field where the $a_i$ are independent transcendental elements.  For each $n$, the polynomial $f_n=\sum_{i=1}^n a_ix_i^p$ has $\nu$-complexity $n$.  However, if we pass to the perfect closure of $\bk$, then $f_n$ factors as $f_n=(\sum_{i=1}^n \sqrt[p]{a_i}x_i)^p$, and it thus has $\nu$-complexity $1$ over this field extension.  For an imperfect field $\bk$, it would be interesting to better understand exactly which implications of the Ananyan--Hochster principle depend only on the value of $\nu$ over that field $\bk$.

\subsection{Effective Bounds}
In their first paper on Stillman's Conjecture, which focused on the case where $f_1,\dots,f_r$ were all quadratic polynomials, Ananyan and Hochster gave an asymptotic bound of $(2r)^{2r}$ on the projective dimension of the ideal generated by $f_1,\dots,f_r$~\cite[\S6]{ananyan-hochster-quadrics}.
For the general case, Ananyan and Hochster's results in~\cite{ananyan-hochster} do lead to an effective bound for the function $B(d_1,\dots,d_r)$ from Stillman's Conjecture (see Conjecture~\ref{conj:Stillman} above), but the bound would involve iterated exponential functions, and has not yet been written out explicitly.   
By contrast, the methods of~\cite{stillman} and~\cite{draisma-lason-leykin} are inherently ineffective.

In a different direction, there has been some work on producing families of examples that provide lower bounds for $B(d_1,\dots,d_r)$.  For instance, if $f_1,\dots,f_r$ have degree $d$, then ~\cite{mccullough} produces a family of ideals whose projective dimension grows like $d^{r-2}$ as $d\to \infty$.  See also Theorem~7.6 above, which comes from~\cite{bmnsss}.

Finally, there is work on producing tight bounds in special cases like 3 cubics or 4 quadrics; see~\cite{engheta1, engheta2, hmms, mantero-mccullough}.  There are a great many open questions in this area and the expository article~\cite{ms} provides a nice introduction (though it was written before many of these recent advances).

\subsection{Variants of Stillman's Conjecture with different inputs}
One can ask whether analogues of Stillman's Conjecture hold where, instead of fixing the degrees of the forms, one instead fixes some other invariants of the ideal.  For instance, ~\cite{cmpv} shows that the projective dimension a nondegenerate prime ideal can be bounded by a function of its degree.

There are also negative results in this vein.  The results of ~\cite{hmms-2} produce primary ideals of bounded multiplicity and codimension, but with arbitrarily large projective dimension.  In~\cite{mccullough-exterior}, McCullough shows that for an ideal in an exterior algebra, there is no Stillman-type bound on Castelnuovo--Mumford regularity (the projective dimension of ideals in the exterior algebra is typically infinite, but regularity is finite).

\subsection{Variants of Stillman's Conjecture with different outputs} \label{ss:genstillman}
In~\cite{genstillman}, we consider generalizations of Stillman's Conjecture where the input is the same, but where we bound invariants other than projective dimension.  

We define an {\bf ideal invariant} as a rule $\tau$ that associates to each homogeneous ideal $I\subseteq \bk[x_1, \ldots, x_n]$ a quantity $\tau(I) \in \bZ \cup \{\infty\}$, and where $\tau(I)$ that is invariant under linear changes of coordinates of the polynomial ring.
We say $\tau$ is {\bf degreewise bounded} if there exists a function $B(d,r)$ such that $\tau(I)\leq B(d,r)$ or $\tau(I)=\infty$ for every ideal $I$ which is generated by $r$ polynomials of degree at most $d$; crucially, $B(d,r)$ does not depend on the number of variables.  In this language, Stillman's Conjecture says that the invariant ``projective dimension'' is degreewise bounded.

To obtain boundedness results, we require two niceness conditions on our invariants.  First, we say that $\tau$ is {\bf cone-stable} if adjoining a new variable does not affect its value.  Second, we say that $\tau$ is {\bf weakly upper semi-continuous} if it is upper semi-continuous in any flat family of ideals.  
(Many interesting invariants, including projective dimension and Castelnuovo--Mumford regularity, are weakly upper semi-continuous but not upper  semi-continuous.) In~\cite[Theorem~1.1]{genstillman} we prove:

\begin{theorem}
Any ideal invariant that is cone-stable and weakly upper semi-continuous is degreewise bounded.
\end{theorem}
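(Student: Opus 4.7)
The plan is to adapt the geometric proof of Stillman's Conjecture (Theorem~\ref{thm:stillmanR}) to an arbitrary ideal invariant $\tau$ satisfying the two hypotheses. Since there are only finitely many degree tuples $(d_1,\dots,d_r)$ with all $d_i\le d$, it suffices to fix such a tuple and show that $\tau$ takes only finitely many finite values on the space $Y=X_{d_1}\times\cdots\times X_{d_r}$. I view each point $y\in Y$ as a tuple $(f_1,\dots,f_r)$ of homogeneous polynomials of the specified degrees in $\bk[x_1,x_2,\dots]$ and set $\tau(y)=\tau(I_y)$, where $I_y$ is the ideal they generate. Cone-stability plays a key structural role here: because each $f_i$ uses only finitely many variables, the ideal $I_y$ can be viewed inside many different polynomial rings $\bk[x_1,\dots,x_n]$, and cone-stability guarantees that $\tau(y)$ is independent of this choice. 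The $\GL_\infty$-invariance of $\tau$ is automatic from its definition as a coordinate-invariant quantity.

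Next, I would establish the following generic constancy statement, a direct analogue of \cite[Theorem~5.12]{stillman}: for every irreducible $\GL_\infty$-stable closed subset $Z\subseteq Y$, there exists a non-empty $\GL_\infty$-stable open subset $U\subseteq Z$ on which $\tau$ is constant (possibly with value $\infty$). The construction proceeds by using the idealized Ananyan--Hochster principle for $\bR$ (Corollary~\ref{cor:inv limit}) to build a flat family of ideals over a dense open subset of $Z$; then weak upper semi-continuity immediately implies that $\tau$ attains its minimum on a further dense open subset, and passing to an appropriate $\GL_\infty$-stable open inside this subset yields the desired $U$.

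With generic constancy in hand, the remainder is the same noetherian induction as in the proof of Theorem~\ref{thm:stillmanR}. Set $Z_1=Y$ and inductively build a strictly descending chain $\cdots\subset Z_2\subset Z_1$ of $\GL_\infty$-stable closed subsets: write $Z_i$ as a finite union of irreducible components, each of which is $\GL_\infty$-stable by the standard fact used in Theorem~\ref{thm:stillmanR}; apply generic constancy to each component to obtain a dense $\GL_\infty$-stable open subset of $Z_i$ on which $\tau$ takes only finitely many values; and let $Z_{i+1}$ be the closed complement. Draisma's theorem forces this chain to terminate after finitely many steps, so $\tau$ takes only finitely many distinct finite values on $Y$. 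Taking the maximum over the finitely many tuples $(d_1,\dots,d_r)$ with $d_i\le d$ then yields the required bound $B(d,r)$.

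The main obstacle is the generic constancy step. Weak upper semi-continuity is, by hypothesis, only a statement about flat families, so the heart of the matter lies in constructing flat families of ideals over an arbitrary irreducible $\GL_\infty$-stable closed subset of $Y$. For projective dimension this is carried out in \cite[Theorem~5.12]{stillman} via a delicate use of Corollary~\ref{cor:inv limit}, which ``trivializes'' the generic point of $Z$ as arising from algebraically independent homogeneous series. Once the flat family exists, any weakly upper semi-continuous invariant inherits generic constancy from it, so no new analytic input specific to $\tau$ is required; the proof of the general theorem is, in this sense, a clean separation of the geometric input (from \cite{stillman}) from the axiomatic properties of $\tau$.
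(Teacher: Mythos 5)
Your proposal is correct and follows essentially the same route as the paper: the noetherian induction over $\GL_\infty$-stable closed subsets via Draisma's Theorem, exactly as in the proof of Theorem~\ref{thm:stillmanR}, with cone-stability making $\tau$ well-defined on $Y$ and weak upper semi-continuity supplying the generic boundedness on each irreducible stratum once a flat family is produced. Like the paper (which simply cites \cite{genstillman} and, for the projective-dimension case, \cite[Theorem~5.12]{stillman}), you defer the one genuinely hard step---constructing the flat family over a dense open subset of an arbitrary irreducible $\GL_\infty$-stable closed subset using Corollary~\ref{cor:inv limit}---to the references, and you correctly identify it as the heart of the matter.
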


This provides many new variants of the Stillman uniformity phenomena described in the introduction.  It is also closely connected to $\GL$-noetherianity results like Draisma's Theorem and those discussed below.

\subsection{Hartshorne's Conjecture}
Hartshorne famously conjectured that every smooth subvariety $X\subseteq \bP^n$ of codimension $c$ must be a complete intersection if $c < \frac{1}{3}n$~\cite{hartshorne-bulletin}.   In~\cite{hartshorne}, we used the circle of ideas related to the Ananyan--Hochster principle to give a simple proof of a special case of this conjecture.  In particular, we showed that if one fixes $c$ and the degree of $X$, then Hartshorne's Conjecture holds whenever $n\gg c,\deg(X)$.  This extends results of Hartshorne, Barth--Van de Ven, and many other authors~\cite{ballico-chiantini, barth-icm, BVdV, BVdV-Grassman, bertram-ein-lazarsfeld, ran} from characteristic zero to arbitrary characteristic.

\subsection{More on $\GL$-noetherianity} \label{subsec:GL}
Draisma's Theorem is closely connected
to some specific conjectures of the third author that arose in his work on syzygies of Segre embeddings~\cite{snowden-segre}, which propose that twisted commutative algebras might satisfy certain noetherianity conditions.  Similar ideas were being developed in the work of Church, Ellenberg, and Farb on FI-modules, and which also revealed noetherianity conditions that held up to a certain group action~\cite{cef}.   In addition, the special cases of Draisma's Theorem for quadratic polynomials was shown by Eggermont~\cite{eggermont} and the case of a single cubic polynomial was proven by Derksen, Eggermont, and Snowden~\cite{DES}.

Finally, recall that topological noetherianity of $\bC^n$ follows from the Hilbert Basis Theorem applied to $\bC[x_1,\dots,x_n]$, which is a much stronger statement. One can conjecture $\GL$-analogues of the Hilbert Basis Theorem from which the above theorems would follow. Some work in this direction can be found in \cite{sym2noeth}, though there are still many open questions in this vein.  See also~\cite{draisma-survey}.

\subsection{Universality of strength}\label{subsec:bde}
A recent result of Bik, Draisma, Eggermont further underscores the centrality of the notion of strength.  (Recall that strength is defined in Remark~\ref{rmk:strength}, and that it is asymptotically equivalent to $\nu$-complexity.)  They prove that for any $\GL(W)$-invariant Zariski closed condition, polynomials of high enough strength will not satisfy that closed condition~\cite{bik-draisma-eggermont}.  This provides another way to make the Ananyan--Hochster principle precise.  It also generalizes a theorem of Kazhdan and Ziegler which bounds the strength of a polynomial in terms of the strength of its partial derivatives~\cite{kazhdan-ziegler}.

\begin{bibdiv}
\begin{biblist}

\bib{ananyan-hochster-quadrics}{article}{
   author={Ananyan, Tigran},
   author={Hochster, Melvin},
   title={Ideals generated by quadratic polynomials},
   journal={Math. Res. Lett.},
   volume={19},
   date={2012},
   number={1},
   pages={233--244},
}

\bib{ananyan-hochster}{article}{
   author={Ananyan, Tigran},
   author={Hochster, Melvin},
   title={Small subalgebras of polynomial rings and Stillman's Conjecture},
date={2016},
note={\arxiv{1610.09268v1}},
}

\bib{ballico-chiantini}{article}{
   author={Ballico, Edoardo},
   author={Chiantini, Luca},
   title={On smooth subcanonical varieties of codimension $2$\ in ${\bf
   P}^{n},$ $n\geq 4$},
   journal={Ann. Mat. Pura Appl. (4)},
   volume={135},
   date={1983},
   pages={99--117 (1984)},
}

\bib{barth-icm}{article}{
   author={Barth, Wolf},
   title={Submanifolds of low codimension in projective space},
   conference={
      title={Proceedings of the International Congress of Mathematicians},
      address={Vancouver, B.C.},
      date={1974},
   },
   book={
      publisher={Canad. Math. Congress, Montreal, Que.},
   },
   date={1975},
   pages={409--413},
}

\bib{BVdV}{article}{
   author={Barth, W.},
   author={Van de Ven, A.},
   title={A decomposability criterion for algebraic $2$-bundles on
   projective spaces},
   journal={Invent. Math.},
   volume={25},
   date={1974},
   pages={91--106},
}

\bib{BVdV-Grassman}{article}{
   author={Barth, W.},
   author={Van de Ven, A.},
   title={On the geometry in codimension $2$ of Grassmann manifolds},
   conference={
      title={Classification of algebraic varieties and compact complex
      manifolds},
   },
   book={
      publisher={Springer, Berlin},
   },
   date={1974},
   pages={1--35. Lecture Notes in Math., Vol. 412},
}

\bib{bayer-stillman}{article}{
   author={Bayer, David},
   author={Stillman, Michael},
   title={A criterion for detecting $m$-regularity},
   journal={Invent. Math.},
   volume={87},
   date={1987},
   number={1},
   pages={1--11},
}

\bib{bmnsss}{article}{
   author={Beder, Jesse},
   author={McCullough, Jason},
   author={N\'u\~nez-Betancourt, Luis},
   author={Seceleanu, Alexandra},
   author={Snapp, Bart},
   author={Stone, Branden},
   title={Ideals with larger projective dimension and regularity},
   journal={J. Symbolic Comput.},
   volume={46},
   date={2011},
   number={10},
   pages={1105--1113},
   issn={0747-7171},
%   review={\MR{2831475}},
%   doi={10.1016/j.jsc.2011.05.011},
}

\bib{bertram-ein-lazarsfeld}{article}{
   author={Bertram, Aaron},
   author={Ein, Lawrence},
   author={Lazarsfeld, Robert},
   title={Vanishing theorems, a theorem of Severi, and the equations
   defining projective varieties},
   journal={J. Amer. Math. Soc.},
   volume={4},
   date={1991},
   number={3},
   pages={587--602},
}

\bib{bik-draisma-eggermont}{article}{
author = {Bik, Arthur},
author = {Draisma, Jan},
author = {Eggermont, Rob},
title = {Polynomials and tensors of bounded strength},
note = {\arxiv{1805.01816v2}},
}

\bib{bruns}{article}{
   author={Bruns, Winfried},
   title={``Jede'' endliche freie Aufl\"osung ist freie Aufl\"osung eines von
   drei Elementen erzeugten Ideals},
   journal={J. Algebra},
   volume={39},
   date={1976},
   number={2},
   pages={429--439},
   issn={0021-8693},
%   review={\MR{0399074}},
%   doi={10.1016/0021-8693(76)90047-8},
}
	
 \bib{bruns-herzog}{book}{
    author={Bruns, Winfried},
    author={Herzog, J\"urgen},
    title={Cohen-Macaulay rings},
    series={Cambridge Studies in Advanced Mathematics},
    volume={39},
    publisher={Cambridge University Press, Cambridge},
    date={1993},
    pages={xii+403},
 }

\bib{burch}{article}{
   author={Burch, Lindsay},
   title={A note on the homology of ideals generated by three elements in
   local rings},
   journal={Proc. Cambridge Philos. Soc.},
   volume={64},
   date={1968},
%   pages={949--952},
%   review={\MR{0230718}},
}

\bib{cmpv}{article}{
author = {Caviglia, Giulio},
author = {Chardin, Marc},
author = {McCullough, Jason},
author = {Peeva, Irena},
author = {Varbaro, Mateo},
title = {Regularity of prime ideals},
note = {\url{https://orion.math.iastate.edu/jmccullo/papers/regularityofprimes.pdf}},
}

\bib{cef}{article}{
   author={Church, Thomas},
   author={Ellenberg, Jordan S.},
   author={Farb, Benson},
   title={FI-modules and stability for representations of symmetric groups},
   journal={Duke Math. J.},
   volume={164},
   date={2015},
   number={9},
   pages={1833--1910},
   issn={0012-7094},
%   review={\MR{3357185}},
%   doi={10.1215/00127094-3120274},
}

\bib{DES}{article}{
   author={Derksen, Harm},
   author={Eggermont, Rob H.},
   author = {Snowden, Andrew},
   title={Topological noetherianity for cubic polynomials},
date={2017},
note={\arxiv{1701.01849v1}},
}

\bib{draisma-survey}{article}{
   author={Draisma, Jan},
   title={Noetherianity up to symmetry},
   conference={
      title={Combinatorial algebraic geometry},
   },
   book={
      series={Lecture Notes in Math.},
      volume={2108},
      publisher={Springer, Cham},
   },
   date={2014},
   pages={33--61},
 }
 
\bib{draisma}{article}{
	author = {Draisma, Jan},
	title = {Topological noetherianity for polynomial functors},
date={2017},
	note = {\arxiv{1705.01419v1}},
}

\bib{draisma-lason-leykin}{article}{
	author = {Draisma, Jan},
	author = {Laso\'{n}, Micha\l},
	author = {Leykin, Anton},
	title = {Stillman's Conjecture via generic initial ideals},
date={2018},
note = {\arxiv{1802.10139v1}},
}

\bib{EL}{article}{
   author={Ein, Lawrence},
   author={Lazarsfeld, Robert},
   title={%\href{http://dx.doi.org/10.1007/s10240-015-0072-2}%
     {The gonality conjecture on syzygies of algebraic curves of large degree}},
   journal={Publ. Math. Inst. Hautes \'Etudes Sci.},
   volume={122},
   date={2015},
   pages={301--313},
   issn={0073-8301},
   % review={\MR{3415069}},
   % doi={10.1007/s10240-015-0072-2},
}

%
%
%\bib{eisenbud}{book}{
%   author={Eisenbud, David},
%   title={Commutative algebra with a view toward algebraic geometry},
%   series={Graduate Texts in Mathematics},
%   volume={150},
%   publisher={Springer-Verlag, New York},
%   date={1995},
%   pages={xvi+785},
%}

\bib{eggermont}{article}{
   author={Eggermont, Rob H.},
   title={Finiteness properties of congruence classes of
   infinite-by-infinite matrices},
   journal={Linear Algebra Appl.},
   volume={484},
   date={2015},
   pages={290--303},
note={\arxiv{1411.0526v1}},
}

\bib{engheta1}{article}{
   author={Engheta, Bahman},
   title={A bound on the projective dimension of three cubics},
   journal={J. Symbolic Comput.},
   volume={45},
   date={2010},
   number={1},
   pages={60--73},
   issn={0747-7171},
%   review={\MR{2568899}},
%   doi={10.1016/j.jsc.2009.06.005},
}
		
\bib{engheta2}{article}{
   author={Engheta, Bahman},
   title={On the projective dimension and the unmixed part of three cubics},
   journal={J. Algebra},
   volume={316},
   date={2007},
   number={2},
   pages={715--734},
   issn={0021-8693},
%   review={\MR{2358611}},
%   doi={10.1016/j.jalgebra.2006.11.018},
}

\bib{genstillman}{article}{
   author={Erman, Daniel},
   author={Sam, Steven~V},
   author={Snowden, Andrew},
   title={Stillman type bounds via twisted commutative algebra},
      date = {2018},
   note={\arxiv{1804.09807v1}}
}

\bib{stillman}{article}{
   author={Erman, Daniel},
   author={Sam, Steven~V},
   author={Snowden, Andrew},
   title={Big Polynomial Rings and Stillman's Conjecture},
      date = {2018},
   note={\arxiv{1801.09852v4}}
}

\bib{imperfect}{article}{
   author={Erman, Daniel},
   author={Sam, Steven~V},
   author={Snowden, Andrew},
   title={Big polynomial rings with imperfect coefficient fields},
      date = {2018},
   note={\arxiv{1806.04208v1}}
}

\bib{hartshorne}{article}{
   author={Erman, Daniel},
   author={Sam, Steven~V},
   author={Snowden, Andrew},
   title={Strength and Hartshorne's Conjecture in high degree},
   date = {2018},
   note={\arxiv{1804.09730v1}}
}

% \bib{goldschmidt}{book}{
%    author={Goldschmidt, David M.},
%    title={Algebraic functions and projective curves},
%    series={Graduate Texts in Mathematics},
%    volume={215},
%    publisher={Springer-Verlag, New York},
%    date={2003},
%    pages={xvi+179},
% }

\bib{faltings}{article}{
   author={Faltings, Gerd},
   title={Ein Kriterium f\"{u}r vollst\H{a}ndige Durchschnitte},
   language={German},
   journal={Invent. Math.},
   volume={62},
   date={1981},
   number={3},
   pages={393--401},
}

\bib{green-koszul1}{article}{
   author={Green, Mark L.},
   title={Koszul cohomology and the geometry of projective varieties},
   journal={J. Differential Geom.},
   volume={19},
   date={1984},
   number={1},
   pages={125--171},
   issn={0022-040X},
   %%review={\MR{739785 (85e:14022)}},
}

\bib{green-koszul2}{article}{
   author={Green, Mark L.},
   title={Koszul cohomology and the geometry of projective varieties. II},
   journal={J. Differential Geom.},
   volume={20},
   date={1984},
   number={1},
   pages={279--289},
   issn={0022-040X},
   %%review={\MR{772134 (86j:14011)}},
}

\bib{green-lazarsfeld}{article}{
   author={Green, Mark},
   author={Lazarsfeld, Robert},
   title={On the projective normality of complete linear series on an
   algebraic curve},
   journal={Invent. Math.},
   volume={83},
   date={1985},
   number={1},
   pages={73--90},
   issn={0020-9910},
   %review={\MR{813583 (87g:14022)}},
   %doi={10.1007/BF01388754},
}

\bib{hartshorne-bulletin}{article}{
   author={Hartshorne, Robin},
   title={Varieties of small codimension in projective space},
   journal={Bull. Amer. Math. Soc.},
   volume={80},
   date={1974},
   pages={1017--1032},
}

\bib{hilbert1890}{article}{
   author={Hilbert, David},
   title={Ueber die Theorie der algebraischen Formen},
   language={German},
   journal={Math. Ann.},
   volume={36},
   date={1890},
   number={4},
   pages={473--534},
   issn={0025-5831},
%   review={\MR{1510634}},
%   doi={10.1007/BF01208503},
}

\bib{hilbert1893}{article}{
   author={Hilbert, David},
   title={Ueber die vollen Invariantensysteme},
   language={German},
   journal={Math. Ann.},
   volume={42},
   date={1893},
   number={3},
   pages={313--373},
   issn={0025-5831},
%   review={\MR{1510781}},
%   doi={10.1007/BF01444162},
}

\bib{hmms}{article}{
   author={Huneke, Craig},
   author={Mantero, Paolo},
   author={McCullough, Jason},
   author={Seceleanu, Alexandra},
   title={A tight bound on the projective dimension of four quadrics},
   journal={J. Pure Appl. Algebra},
   volume={222},
   date={2018},
   number={9},
   pages={2524--2551},
   issn={0022-4049},
%   review={\MR{3783004}},
%   doi={10.1016/j.jpaa.2017.10.005},
}

\bib{hmms-2}{article}{
   author={Huneke, Craig},
   author={Mantero, Paolo},
   author={McCullough, Jason},
   author={Seceleanu, Alexandra},
   title={Multiple structures with arbitrarily large projective dimension
   supported on linear subspaces},
   journal={J. Algebra},
   volume={447},
   date={2016},
   pages={183--205},
   issn={0021-8693},
%   review={\MR{3427633}},
%   doi={10.1016/j.jalgebra.2015.09.019},
}

\bib{kazhdan-schlank}{article}{
	author = {Kazhdan, David},
	author = {Schlank, Tomer~M.},
	title = {On bias and rank},
	note = {\arxiv{1808.05801}},
}

\bib{kazhdan-ziegler}{article}{
	author = {Kazhdan, David},
	author = {Ziegler, Tamar},
	title = {On ranks of polynomials},
	note = {\arxiv{1802.04984v2}},
}

\bib{kohn}{article}{
   author={Kohn, Peter},
   title={Ideals generated by three elements},
   journal={Proc. Amer. Math. Soc.},
   volume={35},
   date={1972},
   pages={55--58},
}

\bib{M2}{misc}{
  label={M2},
  author={Grayson, Daniel~R.},
  author={Stillman, Michael~E.},
  title={Macaulay2, a software system for research
    in algebraic geometry},
  publisher = {available at \url{http://www.math.uiuc.edu/Macaulay2/}},
}

\bib{mantero-mccullough}{article}{
    author={Mantero, Paolo},
   author={McCullough, Jason},
	title = {
The projective dimension of three cubics is at most 5},
	note = {\arxiv{1801.08195v1}},
}

\bib{mccullough}{article}{
   author={McCullough, Jason},
   title={A family of ideals with few generators in low degree and large
   projective dimension},
   journal={Proc. Amer. Math. Soc.},
   volume={139},
   date={2011},
   number={6},
   pages={2017--2023},
note={\arxiv{1005.3361v1}},
   issn={0002-9939},
%   review={\MR{2775379}},
%   doi={10.1090/S0002-9939-2010-10792-X},
}		

\bib{mccullough-exterior}{article}{
   author={McCullough, Jason},
	title = {Stillman's Question for Exterior Algebras and Herzog's Conjecture on Betti Numbers of Syzygy Modules},
	note = {\arxiv{1307.8162v2}},
}
		
\bib{ms}{article}{
   author={McCullough, Jason},
   author={Seceleanu, Alexandra},
   title={Bounding projective dimension},
   conference={
      title={Commutative algebra},
   },
   book={
      publisher={Springer, New York},
   },
   date={2013},
   pages={551--576},
%   review={\MR{3051385}},
%   doi={10.1007/978-1-4614-5292-8_17},
}

\bibitem{sym2noeth} Rohit Nagpal, Steven V Sam, Andrew Snowden, Noetherianity of some degree two twisted commutative algebras, {\it Selecta Math. (N.S.)} {\bf 22} (2016), no.~2, 913--937, \arxiv{1501.06925v2}.

\bib{PS}{article}{
   author={Peeva, Irena},
   author={Stillman, Mike},
   title={Open problems on syzygies and Hilbert functions},
   journal={J. Commut. Algebra},
   volume={1},
   date={2009},
   number={1},
   pages={159--195},
}

\bib{ran}{article}{
   author={Ran, Z.},
   title={On projective varieties of codimension $2$},
   journal={Invent. Math.},
   volume={73},
   date={1983},
   number={2},
   pages={333--336},
}

\bib{schoutens}{book}{
   author={Schoutens, Hans},
   title={The use of ultraproducts in commutative algebra},
   series={Lecture Notes in Mathematics},
   volume={1999},
   publisher={Springer-Verlag, Berlin},
   date={2010},
   pages={x+204},
 }

% \bib{serre}{book}{
%    author={Serre, Jean-Pierre},
%    title={Galois cohomology},
%    note={Translated from the French by Patrick Ion and revised by the
%    author},
%    publisher={Springer-Verlag, Berlin},
%    date={1997},
%    pages={x+210},
%    isbn={3-540-61990-9},
% }

% \bib{sjodin}{article}{
%    author={Sj\"odin, Gunnar},
%    title={Hopf algebras and derivations},
%    journal={J. Algebra},
%    volume={64},
%    date={1980},
%    number={1},
%    pages={218--229},
%  }
 
% \bib{snellman-article}{article}{
%   author={Snellman, Jan},
%   title={Gr\"obner bases and normal forms in a subring of the power series
%   ring on countably many variables},
%   journal={J. Symbolic Comput.},
%   volume={25},
%   date={1998},
%   number={3},
%   pages={315--328},
%}
 
%\bib{snellman}{thesis}{
%	author = {Snellman, Jan},
%	title = {A graded subring of an inverse limit of polynomial rings},
%        year={1998},
%	note = {\url{http://www.diva-portal.org/smash/get/diva2:195258/FULLTEXT01.pdf}},
%}

\bib{snowden-segre}{article}{
   author={Snowden, Andrew},
   title={Syzygies of Segre embeddings and $\Delta$-modules},
   journal={Duke Math. J.},
   volume={162},
   date={2013},
   number={2},
   pages={225--277},
   issn={0012-7094},
%   review={\MR{3018955}},
%   doi={10.1215/00127094-1962767},
}
%
%\bib{stacks}{misc}{
%label={Stacks},
%  author       = {The {Stacks Project Authors}},
%  title        = {Stacks Project},
%  year         = {2017},
%  note = {\url{http://stacks.math.columbia.edu}},
%}

\bib{stanley}{article}{
   author={Stanley, Richard P.},
   title={The upper bound conjecture and Cohen-Macaulay rings},
   journal={Studies in Appl. Math.},
   volume={54},
   date={1975},
   number={2},
   pages={135--142},
%   review={\MR{0458437}},
}

 \bib{van-den-Dries-schmidt}{article}{
    author={van den Dries, L.},
    author={Schmidt, K.},
    title={Bounds in the theory of polynomial rings over fields. A
    nonstandard approach},
    journal={Invent. Math.},
    volume={76},
    date={1984},
    number={1},
    pages={77--91},
 }
 
 \bib{voisin-1}{article}{
   author={Voisin, Claire},
   title={Green's canonical syzygy conjecture for generic curves of odd
   genus},
   journal={Compos. Math.},
   volume={141},
   date={2005},
   number={5},
   pages={1163--1190},
   issn={0010-437X},
%   review={\MR{2157134}},
%   doi={10.1112/S0010437X05001387},
}
		
\bib{voisin-2}{article}{
   author={Voisin, Claire},
   title={Green's generic syzygy conjecture for curves of even genus lying
   on a $K3$ surface},
   journal={J. Eur. Math. Soc. (JEMS)},
   volume={4},
   date={2002},
   number={4},
   pages={363--404},
   issn={1435-9855},
%   review={\MR{1941089}},
%   doi={10.1007/s100970200042},
}
		
\bib{voisin-3}{article}{
   author={Voisin, Claire},
   title={Some results on Green's higher Abel-Jacobi map},
   journal={Ann. of Math. (2)},
   volume={149},
   date={1999},
   number={2},
   pages={451--473},
   issn={0003-486X},
%   review={\MR{1689335}},
%   doi={10.2307/120970},
}

\end{biblist}
\end{bibdiv}
\end{document}